\documentclass{article}

\usepackage{amsfonts}
\usepackage{amsmath}
\usepackage{amssymb}
\usepackage{graphicx}

\newtheorem{theorem}{Theorem}[section]

\newtheorem{assumption}[theorem]{Assumption}

\newtheorem{definition}[theorem]{Definition}

\newtheorem{lemma}[theorem]{Lemma}

\newtheorem{proposition}[theorem]{Proposition}
\newtheorem{remark}[theorem]{Remark}

\newenvironment{proof}[1][Proof]{\noindent\textbf{#1.} }{\ \rule{0.5em}{0.5em}}

\makeatletter
   
   \@addtoreset{equation}{section}
\makeatother



\begin{document}

\title{Stochastic maximum principle for infinite dimensional control systems}
\author{Kai~Du\footnote{Email addresses: \texttt{kai.du@math.ethz.ch} (K.~Du),
\texttt{mqx@hutc.zj.cn} (Q.~Meng)} \and Qingxin~Meng\footnotemark[1]}
\date{}
\maketitle

\vspace{-10pt}
\begin{abstract}
The general maximum principle is proved for an infinite dimensional
controlled stochastic evolution system. The control is allowed to take values
in a nonconvex set and enter into both drift and diffusion terms. The
operator-valued backward stochastic differential equation, which
characterizes the second-order adjoint process, is understood via the concept
of ``generalized solution'' proposed by Guatteri and Tessitore [SICON 44
(2006)].

\medskip

\noindent\textbf{Keywords. }Stochastic maximum principle, stochastic
evolution equation, optimal control, operator-valued backward stochastic
differential equation, generalized solution.
\smallskip

\noindent\textbf{AMS 2010 Subject Classifications. }93E20, 49K27, 60H15.

\end{abstract}

\section{Introduction}

\subsection{Problem formulation and basic assumptions}

In this paper we shall always indicate by $H$ a real separable Hilbert space
and by $\langle\cdot,\cdot\rangle_{H}$ and $\Vert\cdot\Vert_{H}$ its inner
scalar product and norm, respectively. Denote by $B(H)$ the Banach space of
all bounded linear operators from $H$ to itself endowed with the norm $\Vert
T\Vert_{B(H)}:=\sup\{\Vert Tx\Vert_{H}:\Vert x\Vert_{H}=1\}$.

Let $(\Omega,\mathcal{F},\mathbb{F},\mathbb{P})$ be a probability space with
the filtration $\mathbb{F}=(\mathcal{F}_{t})_{t\geq0}$ generated by countable
independent standard Wiener processes $\{W^{i};i\in\mathbb{N}\}$ and augmented
with all $\mathbb{P}$-null sets of $\mathcal{F}$. For simplicity, we write
formally $f\cdot\mathrm{d}W=\sum_{i\in\mathbb{N}}f_{i}\,\mathrm{d}W^{i}$ with
a sequence $f=(f_{i};i\in\mathbb{N})$. We denote by $\mathbb{E}^{\mathcal{F}%
_{t}}$ the conditional expectation with respect to $\mathcal{F}_{t}$, and by
$\mathcal{P}$ the predictable $\sigma$-field on $\Omega\times\lbrack0,1]$.

In this paper, we study an infinite-dimensional optimal control problem
governed by the following abstract semilinear stochastic evolution equation (SEE)%
\begin{align}
\mathrm{d}x(t)  &  =\left[  Ax(t)+f(t,x(t),u(t))\right]  \mathrm{d}%
t+g(t,x(t),u(t))\cdot\mathrm{d}W_{t},\nonumber\label{eq:stateeq}\\
x(0)  &  =x_{0},
\end{align}
where $x(\cdot)$ is the state process and $u(\cdot)$ is the control. The
\emph{control set} $U$ is a nonempty Borel-measurable subset of a metric space
whose metric is denoted by $\mathrm{dist}(\cdot,\cdot)$. Fix an element
(denoted by $0$) in $U$, and then define $|u|_{U}=\mathrm{dist}(u,0)$. An
\emph{admissible control} $u(\cdot)$ is a $U$-valued predictable process such
that%
\[
\sup\big\{\mathbb{E}\left\vert u(t)\right\vert _{U}^{4}:t\in\lbrack
0,1]\big\}<\infty.
\]
Our optimal control problem is to find an admissible control $u(\cdot)$
minimizing the cost functional
\[
J(u(\cdot))=\mathbb{E}\int_{0}^{1}l(t,x(t),u(t))\,\mathrm{d}t+\mathbb{E}%
h(x(1)).
\]
In the above statement, $A$ is the infinitesimal generator of a $C_{0}%
$-semigroup, and
\begin{align*}
f  &  :\Omega\times\lbrack0,1]\times H\times U\rightarrow H,\ \ g:\Omega
\times\lbrack0,1]\times H\times U\rightarrow l^{2}(H),\\
l  &  :\Omega\times\lbrack0,1]\times H\times U\rightarrow\mathbb{R}%
,\ \ h:\Omega\times H\rightarrow\mathbb{R},
\end{align*}
where the Hilbert space
\[
l^{2}(H):=\Big\{z=(z_{i};i\in\mathbb{N}):\Vert z\Vert_{l^{2}(H)}^{2}%
:=\sum\nolimits_{i\in\mathbb{N}}\Vert z_{i}\Vert_{H}^{2}<\infty\Big\}.
\]

Throughout this paper we make the following assumptions.

\begin{assumption}
\label{ass:A} The operator $A:D(A)\subset H\rightarrow H$ is the infinitesimal
generator of a $C_{0}$-semigroup $\{\mathrm{e}^{tA}\in B(H);t\geq0\}$. Set%
\[
M_{A}:=\sup\{\Vert\mathrm{e}^{tA}\Vert_{B(H)}:t\in\lbrack0,1]\}.
\]

\end{assumption}

\begin{assumption}
\label{ass:fglh} The functions $f$, $g$ and $l$ are all $\mathcal{P}%
\times\mathcal{B}(H)\times\mathcal{B}(U)$-measurable, and $h$ is
$\mathcal{F}_{1}\times\mathcal{B}(H)$-measurable; for each $(t,u,\omega
)\in\lbrack0,1]\times U\times\Omega$, $f,g,l$ and $h$ are globally twice
Fr\`{e}chet differentiable with respect to $x$; $f_{x},g_{x},f_{xx}%
,g_{xx},l_{xx}$ and $h_{xx}$ are bounded by a constant $M_{0}$; $f,g,l_{x}$
and $h_{x}$ are bounded by $M_{0}(1+\Vert x\Vert_{H}+|u|_{U})$; $l$ and $h$ is
bounded by $M_{0}(1+\Vert x\Vert_{H}^{2}+|u|_{U}^{2})$.\noindent
\end{assumption}

In view of Assumption \ref{ass:A}, the precise meaning of the equation
\eqref{eq:stateeq} is
\[
x({t})=\mathrm{e}^{tA}x_{0}+\int_{0}^{t}\mathrm{e}^{(t-s)A}\big[f(s,x({s}%
),u(s))\,\mathrm{d}s+g(s,x({s}),u(s))\cdot\mathrm{d}W_{s}\big].
\]
A process $x(\cdot)$ satisfying the above equality is usually called a
\emph{mild solution} to equation \eqref{eq:stateeq}, cf.
\cite{da1992stochastic}.

\subsection{Developments of stochastic maximum principle and contributions of the paper}

The aim of this work is to find a stochastic maximum principle (SMP for
short) for the optimal control. As we know, SMP is one of the basic tools to
study optimal stochastic control problems. Since \cite{kushner1972necessary},
there have been a number of results on such a subject. For finite dimensional
systems\footnote{For more related studies, we refer to
\cite{yong1999stochastic} and the references therein.}, the problem in the
general case was solved by Peng \cite{peng1990general}. Hereafter, by the
word ``general'' we mean the allowance of the control into the diffusion term
and the nonconvexity of control domains. In contrast, most existing results
on infinite dimensional systems are limited to the case in which the control
domain is convex or the diffusion does not depend on the control (cf.
\cite{bensoussan1983stochastic, hu1990maximum,tang1993maximum}). Recently,
several works \cite{lvzhangmaximum,fuhrman2012stochastic,du2012general} were
devoted to the general SMP in infinite dimensions. L\"u-Zhang
\cite{lvzhangmaximum} first addressed such a problem and formulated a general
SMP in which they assumed the existence of the second-order adjoint process.
Fuhrman et al. \cite{fuhrman2012stochastic} focused on a concrete equation
(which was a stochastic parabolic PDE with deterministic coefficients) and
gave a complete formulation of SMP, while their approach depended on the
special structure of the equation. In our previous work \cite{du2012general},
a general SMP was obtained  for abstract stochastic parabolic equations
driven by finite Wiener processes. As far as we know, the general SMP for
stochastic evolution equations as form \eqref{eq:stateeq} is not completely
solved.

Our basic approach to derive the general SMP follows Peng's idea of
second-order expansion in calculating the variation of the cost functional
caused by the spike variation. The key point is how to understand and solve a
$B(H)$-valued backward stochastic differential equation (BSDE) which
characterizes the second-order adjoint process in our SMP. To do this, we
exploit a concept of solution to this equation called ``generalized
solution'' which was first proposed by Guatteri-Tessitore
\cite{guatteri2006backward} in the study of infinite dimensional LQ problems,
and prove the existence-uniqueness result in our framework. However, the
generalized solution only characterizes the first unknown
component\footnote{Normally, the solution of a BSDE consists of a pair of
adapted processes of which the second one is the diffusion term. For more
aspects on BSDE, we refer to \cite{el2008backward}.} but says nothing about
the second one. As a consequence, it seems difficult to derive our SMP via
the traditional approach (i.e. applying Ito's formula). To avoid this
difficulty, we first derive a basic property of the generalized solution.
Then our goal is achieved thanks to the Lebesgue differentiation theorem.

The rest of this paper is organized as follows. Section 2 is devoted to some
preliminary results on SEE and backward SEE (BSEE) in the Hilbert space. In
Section 3, we study the well-posedness and basic properties of an
operator-valued BSDE. With the previous preparations, we shall state and
prove our main theorem, the stochastic maximum principle, in the final
section.

\medskip We finish the introduction with some notations. Let $H$ be a
separable Hilbert space and $\mathcal{B}(H)$ be its Borel $\sigma$-field. The
following classes of processes will be used in this article. Here
$p,q\in\lbrack1,\infty]$.

$\bullet\ \ L_{\mathcal{P}}^{p}(\Omega\times\lbrack0,1];H)$ denotes the space
of equivalence classes of processes $x(\cdot)$, admitting a predictable
version such that $\mathbb{E}\int_{0}^{1}\Vert x(t)\Vert_{H}^{p}
\mathrm{d}t<\infty$.

$\bullet\ \ \mathcal{C}_{\mathcal{P}}([0,1];L^{p}(\Omega;H))$ denotes the
space of $H$-valued processes $x(\cdot)$ such that $x(\cdot):[0,1]\rightarrow
L^{p}(\Omega;H)$ is continuous and has a predictable modification.
%

Moreover, since the $\sigma$-field generated by the operator norm in $B(H)$ is
too large, we shall define the following spaces with respect to $B(H)$-valued
processes and random variables.

$\bullet\ \ L_{\mathcal{P},\mathrm{S}}^{p}(\Omega\times\lbrack0,1];B(H))$
denotes the space of equivalence classes of $B(H)$-valued processes $T(\cdot)$
such that $T(\cdot)x\in L_{\mathcal{P}}^{p}(%
\Omega
\times\lbrack0,1];H)$ for each $x\in H$. Here the subscript \textquotedblleft%
$\mathrm{S}$\textquotedblright\ stands for \textquotedblleft strongly
measurable\textquotedblright.

$\bullet\ \ L_{\mathcal{F}_{t},\mathrm{S}}^{p}(\Omega;B(H))$ denotes the space
of equivalence classes of $B(H)$-valued random variable $T$ such that $Tx\in
L_{\mathcal{F}_{t}}^{p}(\Omega;H)$ for each $x\in H$.


\section{Preliminary results on SEEs and BSEEs}

Let $b:~\Omega\times\lbrack0,T]\times H\rightarrow H$ and $\sigma
:~\Omega\times\lbrack0,T]\times H\rightarrow l^{2}(H)$ be two
$\mathcal{P\times}\mathcal{B}(H)$-measurable mappings and $F:\Omega
\times\lbrack0,T]\times H\times l^{2}(H)\rightarrow H$ be a $\mathcal{P\times
B}(H)\times\mathcal{B}(l^{2}(H))$-measurable mapping such that
\begin{align*}
&  \Vert b(t,x)-b(t,\bar{x})\Vert_{H}+\Vert\sigma(t,x)-\sigma(t,\bar{x}%
)\Vert_{l^{2}(H)}\leq M_{1}\Vert x-\bar{x}\Vert_{H},\\
&  \Vert F(t,x,y)-F(t,\bar{x},\bar{y})\Vert_{H}\leq M_{1}\left(  \Vert
x-\bar{x}\Vert_{H}+\Vert y-\bar{y}\Vert_{l^{2}(H)}\right)  \ \ \text{(a.s.)}%
\end{align*}
for some constant $M_{1}>0$ and any $t\in\lbrack0,1]$, $x,\bar{x}\in H$ and
$y,\bar{y}\in l^{2}(H)$.

For given operator $A$ satisfying Assumption \ref{ass:A}, consider the
following SEE
\begin{equation}
x({t})=\mathrm{e}^{tA}x_{0}+\int_{0}^{t}\mathrm{e}^{(t-s)A}\big[b(s,x({s}%
))\,\mathrm{d}s+\sigma(s,x({s}))\cdot\mathrm{d}W_{s}\big] \label{eq:SEE}%
\end{equation}
and BSEE%
\begin{equation}
p({t})=\mathrm{e}^{(1-t)A^{\ast}}\xi+\int_{t}^{1}\mathrm{e}^{(s-t)A^{\ast}%
}\big[F(s,p({s}),q(s))\,\mathrm{d}s-q(s)\cdot\mathrm{d}W_{s}\big].
\label{eq:bsee}%
\end{equation}

Now we present some preliminary results on SEE \eqref{eq:SEE} and BSEE
\eqref
{eq:bsee} which will be often used in this paper.

\begin{lemma}
\label{lem:see} Under the above setting, we have the following assertions:

(1) If $p\in\lbrack2,\infty)$, $b(\cdot,0)\in L_{\mathcal{P}}^{p}(\Omega
\times\lbrack0,1];H)$ and $\sigma(\cdot,0)\in L_{\mathcal{P}}^{p}(\Omega
\times\lbrack0,1];l^{2}(H))$, then SEE \eqref{eq:SEE} has a unique solution
$x(\cdot)$ in the space $\mathcal{C}_{\mathcal{P}}([0,1];L^{p}(\Omega;H))$ for
any given $x_{0}\in H$, with the $L^{p}$-estimate
\begin{align*}
\mathbb{E}\sup_{t\in\lbrack0,1]}\left\Vert x(t)\right\Vert _{H}^{p}~\leq~  &
K(M_{A},M_{1},p)\,\mathbb{E}\bigg[\left\Vert x_{0}\right\Vert _{H}%
^{p}+\bigg(\int_{0}^{1}\left\Vert b(t,0)\right\Vert _{H}\mathrm{d}%
t\bigg)^{p}\\
&  +\bigg(\int_{0}^{1}\left\Vert \sigma(t,0)\right\Vert _{l^{2}(H)}%
^{2}\mathrm{d}t\bigg)^{p/2}\bigg].
\end{align*}

(2) If $F(\cdot,0,0)\in L_{\mathcal{P}}^{2}(\Omega\times\lbrack0,1];H)$, then
BSEE \eqref{eq:bsee} has a unique solution $(p(\cdot),q(\cdot))$ in the space%
\[
\mathcal{C}_{\mathcal{P}}([0,1];L^{2}(\Omega;H))\times L_{\mathcal{P}}%
^{2}(\Omega\times\lbrack0,1];l^{2}(H))
\]
for any given $\xi\in L_{\mathcal{F}_{1}}^{2}(\Omega;H)$, with the estimate
\begin{align*}
&  \mathbb{E}\sup_{t\in\lbrack0,1]}\left\Vert p(t)\right\Vert _{H}%
^{2}+\mathbb{E}\int_{0}^{1}\left\Vert q(t)\right\Vert _{l^{2}(H)}
^{2}\mathrm{d}t\\
&  \leq~K(M_{A},M_{1})\,\mathbb{E}\left[  \left\Vert \xi\right\Vert _{H}%
^{2}+\int_{0}^{1}\left\Vert F(t,0,0\right)  \Vert_{H}^{2}\,\mathrm{d}t\right]
.
\end{align*}

Hereafter $K(\cdot)$ is a positive constant depending only on the values in
the brackets.
\end{lemma}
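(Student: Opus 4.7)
Both parts will be handled by Banach fixed-point arguments in the natural spaces, but the two parts differ substantially in how the stochastic convolution is controlled. For part~(1) I will work directly in $\mathcal{C}_{\mathcal{P}}([0,1];L^{p}(\Omega;H))$ with the forward mild-solution map; for part~(2) the presence of the semigroup $e^{(s-t)A^{\ast}}$ \emph{inside} the It\^o integral obstructs a direct appeal to martingale representation, so I will pass through Yosida regularization.

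For part~(1), I would define
\[
\Phi(x)(t)=e^{tA}x_{0}+\int_{0}^{t}e^{(t-s)A}b(s,x(s))\,\mathrm{d}s+\int_{0}^{t}e^{(t-s)A}\sigma(s,x(s))\cdot\mathrm{d}W_{s},
\]
and show it is a contraction on $\mathcal{C}_{\mathcal{P}}([0,1];L^{p}(\Omega;H))$ equipped with the weighted norm $\|x\|_{\lambda}:=\sup_{t}e^{-\lambda t}(\mathbb{E}\|x(t)\|_{H}^{p})^{1/p}$ for $\lambda$ large enough. The deterministic convolution is handled by H\"older, while the stochastic convolution requires a $p$-th moment bound of the form $\mathbb{E}\|\int_{0}^{t}e^{(t-s)A}\sigma\cdot\mathrm{d}W_{s}\|_{H}^{p}\leq C(M_{A},p)\,\mathbb{E}(\int_{0}^{t}\|\sigma\|_{l^{2}(H)}^{2}\mathrm{d}s)^{p/2}$; this I would obtain by the Da~Prato--Kwapie\'n--Zabczyk factorization method together with Burkholder--Davis--Gundy. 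Contraction plus iteration then yields existence/uniqueness, and the $\sup_{t}\mathbb{E}\|x(t)\|_{H}^{p}$ bound upgrades to the claimed $\mathbb{E}\sup_{t}\|x(t)\|_{H}^{p}$ estimate by a final BDG-and-Gronwall step applied to $\Phi$ evaluated at the solution, using the Lipschitz inequality to move from $b(s,x(s)),\sigma(s,x(s))$ to $b(s,0),\sigma(s,0)$ at the cost of a $\|x(s)\|_{H}$ term that is absorbed by Gronwall.

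For part~(2), I would introduce the Yosida approximants $A_{n}^{\ast}:=nA^{\ast}(nI-A^{\ast})^{-1}\in B(H)$, which are bounded and for which $e^{tA_{n}^{\ast}}x\to e^{tA^{\ast}}x$ strongly, uniformly in $t\in[0,1]$, for every $x\in H$. With $A_{n}^{\ast}$ bounded, $e^{(s-t)A_{n}^{\ast}}$ is an invertible operator depending smoothly on its argument, and the mild BSEE
\[
p_{n}(t)=e^{(1-t)A_{n}^{\ast}}\xi+\int_{t}^{1}e^{(s-t)A_{n}^{\ast}}\big[F(s,p_{n},q_{n})\,\mathrm{d}s-q_{n}\cdot\mathrm{d}W_{s}\big]
\]
can be converted, via multiplication by $e^{tA_{n}^{\ast}}$, into a standard $H$-valued BSDE whose $q$-component is produced by It\^o martingale representation and whose existence follows from a classical Pardoux--Peng type contraction argument. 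I would then derive an $L^{2}$-estimate of the form in the statement uniformly in $n$, using It\^o's formula on $\|p_{n}(t)\|_{H}^{2}$ (now legitimate because $A_{n}^{\ast}$ is bounded), the Lipschitz hypothesis on $F$, BDG, and Young's inequality to absorb the $q_{n}$-term. A Cauchy argument applied to $(p_{n}-p_{m},q_{n}-q_{m})$ using the strong convergence of the semigroup approximants produces a limit $(p,q)$; checking that this limit satisfies the original mild BSEE uses dominated convergence for the Bochner and It\^o integrals together with the strong convergence $e^{\cdot A_{n}^{\ast}}\to e^{\cdot A^{\ast}}$.

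\textbf{Main obstacle.} The nontrivial step is part~(2), and specifically the fact that $q(\cdot)$ cannot be extracted by a single application of martingale representation: the kernel $e^{(s-t)A^{\ast}}$ mixes the time variable $t$ (outside the integral) with $s$ (inside), and the representation theorem gives the diffusion coefficient of a martingale indexed by the outer time, not of a kernel-weighted one. Yosida regularization removes this obstruction because invertibility of $e^{tA_{n}^{\ast}}$ lets one absorb the kernel into the unknown; the delicate point is then to verify that the uniform estimates survive the limit $n\to\infty$, which rests on the Hille--Yosida bound $\sup_{n,t\in[0,1]}\|e^{tA_{n}^{\ast}}\|_{B(H)}<\infty$ implied by Assumption~\ref{ass:A}.
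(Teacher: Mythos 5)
The paper gives no proof of this lemma at all: it is quoted as known, with references to \cite{da1992stochastic} for part (1) and \cite{hu1991adapted} for part (2). So the real question is whether your outline would actually deliver the stated estimates under Assumption \ref{ass:A}, which allows an \emph{arbitrary} $C_{0}$-semigroup with only the bound $M_{A}$ (no contractivity, no analyticity). In part (1) your division of labour between the tools is inverted: for fixed $t$ the $p$-th moment bound of the stochastic convolution follows from Burkholder--Davis--Gundy alone (the kernel $e^{(t-s)A}$ is merely bounded by $M_{A}$ and $t$ is frozen), whereas the passage from $\sup_{t}\mathbb{E}\Vert x(t)\Vert^{p}$ to $\mathbb{E}\sup_{t}\Vert x(t)\Vert^{p}$ is precisely where a ``BDG-and-Gronwall step'' fails, because $t\mapsto\int_{0}^{t}e^{(t-s)A}\sigma\cdot\mathrm{d}W_{s}$ is not a martingale in $t$. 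The factorization method is what handles the supremum, and it needs $\alpha\in(1/p,1/2)$, hence $p>2$; the endpoint $p=2$ requires a separate argument (contraction/dilation, or an approximation and lower semicontinuity step), which you do not address.

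The genuine gap is in part (2). Converting to a BSDE with bounded generator via Yosida approximants is fine for fixed $n$, but your uniform-in-$n$ estimate via It\^o's formula on $\Vert p_{n}(t)\Vert_{H}^{2}$ produces the drift contribution $2\int_{t}^{1}\langle p_{n}(s),A_{n}^{\ast}p_{n}(s)\rangle_{H}\,\mathrm{d}s$, and none of the tools you list (Lipschitz bound on $F$, BDG, Young) touches it. The bound $M_{A}$ gives no one-sided (quasi-dissipativity) estimate on $A^{\ast}$ or $A_{n}^{\ast}$ with respect to the given inner product: from the Hille--Yosida resolvent bound one only gets $\langle A_{n}^{\ast}x,x\rangle\leq c\,n(M-1)\Vert x\Vert^{2}$-type control when $M>1$, which blows up with $n$, and Gronwall then yields constants of order $e^{cn}$; a renorming making the semigroup quasi-contractive need not exist for a general $C_{0}$-semigroup on $H$. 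The uniform bound $\sup_{n,t}\Vert e^{tA_{n}^{\ast}}\Vert_{B(H)}<\infty$ that you single out as the key controls the mild formulation but never enters the It\^o identity where the obstruction sits. The argument the paper implicitly relies on (\cite{hu1991adapted}) avoids It\^o's formula on the norm altogether: one works with the mild equation, obtains $p(t)$ as a conditional expectation, and assembles $q(t)$ from the martingale representations of $\xi$ and of $F(s,\cdot)$ for each $s$, with the semigroup acting outside these representations, so that only $M_{A}$ enters the contraction and the final estimate. To close your part (2) you would need either that construction or an extra structural hypothesis (e.g.\ quasi-contractivity after an equivalent Hilbertian renorming), which the lemma does not assume.
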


The above results can be found in, for example,
\cite{da1992stochastic,hu1991adapted}. The following lemma, concerning the
duality between SEE \eqref{eq:SEE} and BSEE \eqref{eq:bsee}, can be easily
derived by the Yoshida approximation (cf. \cite{tessitore1996existence}).

\begin{lemma}
\label{lem:duality} Under the conditions in Lemma \ref{lem:see}, we have%
\begin{align*}
&  \mathbb{E}\left\langle x(t_{1}),p(t_{1})\right\rangle _{H}+\mathbb{E}%
\int_{t_{1}}^{t_{2}}\left[  \left\langle b(s,x(s)),p(s)\right\rangle
_{H}+\left\langle \sigma(s,x(s)),q(s)\right\rangle _{l^{2}(H)}\right]
\mathrm{d}s\\
&  =\mathbb{E}\left\langle x(t_{2}),p(t_{2})\right\rangle _{H}+\mathbb{E}%
\int_{t_{1}}^{t_{2}}\left\langle x(s),F(s,p(s),q(s))\right\rangle
_{H}\mathrm{d}s
\end{align*}
for any $0\leq t_{1}\leq t_{2}\leq1$.
\end{lemma}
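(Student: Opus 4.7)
The plan is to establish the duality identity by Yosida approximation, essentially following the strategy sketched in \cite{tessitore1996existence}. The core obstruction is that $x(\cdot)$ and $p(\cdot)$ are only \emph{mild} solutions, so their trajectories need not lie in $D(A)$; consequently Ito's product formula cannot be applied directly to $\langle x(t),p(t)\rangle_H$.

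First I would introduce the Yosida approximants $A_n:=nA(nI-A)^{-1}\in B(H)$, which generate uniformly bounded $C_0$-semigroups $\mathrm{e}^{tA_n}$ converging strongly to $\mathrm{e}^{tA}$ on $[0,1]$; the adjoint statement holds for $A_n^{\ast}$ and $\mathrm{e}^{tA^{\ast}}$. Replacing $A$ by $A_n$ in \eqref{eq:SEE} and $A^{\ast}$ by $A_n^{\ast}$ in \eqref{eq:bsee} produces approximate forward/backward equations with bounded generators, whose solutions $x_n(\cdot)$ and $(p_n(\cdot),q_n(\cdot))$ are genuine Ito processes in $H$ (the mild and strong forms coincide).

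Next I would apply the Ito product formula to $\langle x_n(t),p_n(t)\rangle_H$, integrate from $t_1$ to $t_2$, and take expectation. Because $A_n\in B(H)$, the generator contributions cancel exactly, $\langle A_n x_n,p_n\rangle_H+\langle x_n,A_n^{\ast} p_n\rangle_H=0$, leaving precisely the identity of the lemma with $(x,p,q,A)$ replaced by $(x_n,p_n,q_n,A_n)$ and with the cross-variation between the martingale parts producing the $\langle\sigma,q\rangle_{l^2(H)}$ term.

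It remains to pass to the limit $n\to\infty$. The Hille-Yosida theorem bounds $M_{A_n}$ uniformly in $n$, so the estimates of Lemma \ref{lem:see} hold with an $n$-independent constant. Combined with the strong convergence $\mathrm{e}^{tA_n}\to\mathrm{e}^{tA}$ and the Lipschitz bounds on $b,\sigma,F$, a standard Gronwall argument on the mild forms yields $x_n\to x$ in $\mathcal{C}_{\mathcal{P}}([0,1];L^p(\Omega;H))$ and $(p_n,q_n)\to(p,q)$ in $\mathcal{C}_{\mathcal{P}}([0,1];L^2(\Omega;H))\times L_{\mathcal{P}}^2(\Omega\times[0,1];l^2(H))$. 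The Lipschitz continuity of $b,\sigma,F$ then transfers every term in the approximate identity to its limiting counterpart. The main (though essentially technical) obstacle is verifying these strong convergences with the integrability needed to justify the interchange of limit and expectation uniformly on $[t_1,t_2]$; this is precisely the content of the Yosida-approximation argument in the cited reference.
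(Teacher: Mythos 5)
Your proof is correct and follows exactly the route the paper indicates: Yosida approximation of $A$ by bounded generators $A_n$, Ito's product formula for the approximate forward/backward solutions with exact cancellation of the generator terms, and passage to the limit using the uniform semigroup bounds, Lipschitz estimates and the convergence of the approximating solutions. This is the same argument the paper appeals to (citing Tessitore's Yosida-approximation scheme), so no further comment is needed.
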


\section{Operator-valued BSDEs: well-posedness and properties}

In this section, we study the following operator-valued BSDE (OBSDE)
\begin{align}
\mathrm{d}P(t)\,=\,  &  -\big\{A^{\ast}P(t)+P(t)A+A_{\sharp}^{\ast
}(t)P(t)+P(t)A_{\sharp}(t)\nonumber\label{eq:obsde}\\
&  +\mathrm{Tr[}C^{\ast}PC+QC+C^{\ast}Q](t)+G(t)\big\}\mathrm{d}%
t+Q(t)\cdot\mathrm{d}W_{t},\nonumber\\
P(1)\,=\,  &  P_{1}%
\end{align}
with the unknown processes $P(\cdot)$ and $Q(\cdot)$, where $A$ satisfies
Assumption \ref{ass:A}, $A_{\sharp}(\cdot)$ and $C(\cdot)$ are given
coefficients, and
\[
\mathrm{Tr}[C^{\ast}PC+QC+C^{\ast}Q](t)=\sum\nolimits_{i\in\mathbb{N}}
[C_{i}^{\ast}PC_{i}+Q_{i}C_{i}+C_{i}^{\ast}Q_{i}](t).
\]
We call the pair $(G,P_{1})$ the \emph{input} of OBSDE \eqref{eq:obsde}. Such
a equation will be used to characterize the second order adjoint process of
the controlled system in the next section. We make the following assumption.

\begin{assumption}
\label{ass:obsde} $A_{\sharp}(\cdot)\in L_{\mathcal{P},\mathrm{S}}^{\infty
}(\Omega\times\lbrack0,1];B(H))$. $C(\cdot)=(C_{i}(\cdot)\,;\,i\in\mathbb{N})$
with $C_{i}(\cdot)\in L_{\mathcal{P},\mathrm{S}}^{\infty}(\Omega\times
\lbrack0,1];B(H))$. Assume
\[
M_{2}:=\mathop{\mathrm{ess}\sup}\nolimits_{t,\omega}\Big\{\Vert A_{\sharp
}(t,\omega)\Vert_{B(H)}^{2},\ \sum\nolimits_{i\in\mathbb{N}}\Vert
C_{i}(t,\omega)\Vert_{B(H)}^{2}\Big\}<\infty.
\]

\end{assumption}

\subsection{The well-posedness}

The solvability theory of $B(H)$-valued BSDEs is still far from complete. A
first remarkable work on such a subject was found in Guatteri-Tessitore
\cite{guatteri2006backward} where, inspired by the notion of ``strong
solution'' for PDEs (cf. \cite{bensoussan1993representation}), the authors
proposed for an OBSDE the concept of \emph{generalized solution} which only
involved the first unknown $P(\cdot)$, and obtained the corresponding
existence-uniqueness result. Their approach based on the solvability of
\eqref{eq:obsde} in the Hilbert space $B_{2}(H)$ of all Hilbert-Schmidt
operators from $H$ to itself, see Theorem 5.4 in \cite{guatteri2006backward}.

Following the spirit of Guatteri-Tessitore \cite{guatteri2006backward}, we
give the following definition.

\begin{definition}
[generalized solution]\label{defn:ger}$P(\cdot)$ is called a \emph{generalized
solution} to OBSDE \eqref{eq:obsde} in $L_{\mathcal{P},\mathrm{S}}^{2}%
(\Omega\times\lbrack0,1];B(H))$ if there exists a sequence $(P^{n},Q^{n}%
,G^{n})$ such that

\emph{1)} $P^{n}(1)\in L_{\mathcal{F}_{1}}^{2}(\Omega;B_{2}(H))$, $G^{n}\in
L_{\mathcal{P}}^{2}(\Omega\times\lbrack0,1];B_{2}(H))$, and there exists a
constant $\lambda\geq1$ such that
\[
\Vert P^{n}(1)\Vert_{B(H)}\leq\lambda\Vert P_{1}\Vert_{B(H)}\text{\ \ and\ \ }%
\Vert G^{n}(t)\Vert_{B(H)}\leq\lambda\Vert G(t)\Vert_{B(H)}\ \ \text{(a.s.)}%
\]
for any $n\in\mathbb{N}$ and $t\in\lbrack0,1]$.

\emph{2)} $(P^{n},Q^{n})$ is a \emph{mild solution}\footnote{See Definition
5.3 in \cite{guatteri2006backward}.} to OBSDE \eqref{eq:obsde} with the input
$(G^{n},P^{n}(1))$, that is, for all $t\in\lbrack0,1]$,
\begin{align}
\label{eq:mild}P^{n}(t)\,=\,  &  \mathrm{e}^{(1-t)A^{\ast}}P^{n}%
(1)\mathrm{e}^{(1-t)A}+\int_{t}^{1}\mathrm{e}^{(s-t)A^{\ast}}[A_{\sharp}%
^{\ast}P^{n}+P^{n}A_{\sharp}+G^{n}](s)\mathrm{e}^{(s-t)A}\,\mathrm{d}%
s\nonumber\\
&  +\int_{t}^{1}\mathrm{e}^{(s-t)A^{\ast}}\mathrm{Tr[}C^{\ast}P^{n}%
C+Q^{n}C+C^{\ast}Q^{n}](s)\mathrm{e}^{(s-t)A}\,\mathrm{d}s\nonumber\\
&  +\int_{t}^{1}\mathrm{e}^{(s-t)A^{\ast}}Q^{n}(s)\mathrm{e}^{(s-t)A}%
\cdot\mathrm{d}W_{s},\ \ \text{\ (a.s.).}%
\end{align}

\emph{3)} for any $x,y\in H$ and $t\in\lbrack0,1]$,%
\[
\langle x,G^{n}(t)y\rangle_{H}\rightarrow\langle x,G(t)y\rangle_{H}%
\ \ \text{and}\ \ \langle x,P^{n}(t)y\rangle_{H}\rightarrow\langle
x,P(t)y\rangle_{H}\ \ \text{(a.s.)}%
\]

\end{definition}

In the above definition only the process $P(\cdot)$ is characterized.
Nevertheless, this is sufficient and even natural for the optimal control
theory since $Q(\cdot)$ is not involved in the formulation of the SMP (see
Theorem \ref{thm:smp}). For more detailed account we refer to Remark 6.3 in
\cite{guatteri2006backward}. Now we give the following well-posedness result
on the generalized solution to OBSDE \eqref{eq:obsde}.

\begin{theorem}
\label{thm:obsde} Let Assumptions \ref{ass:A} and \ref{ass:obsde} be
satisfied. Suppose $P_{1}\in L_{\mathcal{F}_{1},\mathrm{S}}^{2}(\Omega;B(H))$
and $G\in L_{\mathcal{P},\mathrm{S}}^{2}(\Omega\times\lbrack0,1];B(H))$. Then

\emph{i)} there exists a unique {generalized solution} $P(\cdot)$ to OBSDE \eqref{eq:obsde};

\emph{ii)} for each $\tau\in\lbrack0,1]$ and any $\xi,\zeta\in L_{\mathcal{F}%
_{\tau}}^{4}(\Omega;H)$,
\begin{align}
\left\langle \xi,P(\tau)\zeta\right\rangle _{H}~=~  &  \mathbb{E}%
^{\mathcal{F}_{\tau}}\left\langle y^{\tau,\xi}(1),P_{1}y^{\tau,\zeta
}(1)\right\rangle _{H}\nonumber\label{eq:exprP}\\
&  +\mathbb{E}^{\mathcal{F}_{\tau}}\int_{\tau}^{1}\left\langle y^{\tau,\xi
}(t),G(t)y^{\tau,\zeta}(t)\right\rangle _{H}\mathrm{d}t\ \ \text{(a.s.)}%
\end{align}
with $y^{\tau,\xi}$ (similarly for $y^{\tau,\zeta}$) being the solution to
equation
\begin{align}
y^{\tau,\xi}(t)~=~  &  \mathrm{e}^{(t-\tau)A}\xi+\int_{\tau}^{t}%
\mathrm{e}^{(t-s)A}A_{\sharp}(s)y^{\tau,\xi}(s)\mathrm{d}%
s\nonumber\label{eq:y.eq}\\
&  +\int_{\tau}^{t}\mathrm{e}^{(t-s)A}C(s)y^{\tau,\xi}(s)\cdot\mathrm{d}%
W_{s},\ \ t\in\lbrack\tau,1];
\end{align}

\emph{iii)} for each $\tau\in\lbrack0,1]$, it holds almost surely that
\begin{equation}
\left\Vert P(\tau)\right\Vert _{B(H)}^{2}\leq K(M_{A},M_{2})\,\mathbb{E}%
^{\mathcal{F}_{\tau}}\bigg[ \left\Vert P_{1}\right\Vert _{B(H)}^{2}+\int
_{0}^{1}\left\Vert G(t)\right\Vert _{B(H)}^{2}\mathrm{d}t\bigg] =:\Lambda
_{\tau}\, ; \label{eq:estP}%
\end{equation}

\emph{iv)} for each $\tau\in\lbrack0,1)$ and any $\xi,\zeta\in L_{\mathcal{F}%
_{\tau}}^{4}(\Omega;H)$,\emph{ }
\[
\lim_{|s-t|\rightarrow0}\mathbb{E}\left\langle \xi,(P(s)-P(t))\zeta
\right\rangle _{H}=0\ \ \text{with}\ s,t\in\lbrack\tau,1].
\]

\end{theorem}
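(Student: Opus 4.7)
The plan is to take the bilinear representation \eqref{eq:exprP} as the cornerstone of the entire proof: once it is established for mild solutions with Hilbert--Schmidt inputs, the existence, uniqueness, the operator-norm bound \eqref{eq:estP}, and the continuity in (iv) all follow by approximation and elementary estimates.

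The first step is to derive \eqref{eq:exprP} in the Hilbert--Schmidt setting. Suppose $P_1^n\in L^2_{\mathcal{F}_1}(\Omega;B_2(H))$ and $G^n\in L^2_{\mathcal{P}}(\Omega\times[0,1];B_2(H))$, and let $(P^n,Q^n)$ be the corresponding mild solution of \eqref{eq:obsde} provided by Theorem 5.4 of \cite{guatteri2006backward}. I would apply It\^o's formula to $\langle y^{\tau,\xi}(t),P^n(t)y^{\tau,\zeta}(t)\rangle_H$, with $y^{\tau,\xi},y^{\tau,\zeta}$ solving \eqref{eq:y.eq}. The drift contributions $\langle Ay+A_\sharp y,P^n y\rangle$ and $\langle y,(A^*P^n+P^n A+A_\sharp^*P^n+P^n A_\sharp)y\rangle$ cancel; the quadratic-variation cross term between the $C(\cdot)y$ diffusion of $y$ and the $Q^n$ diffusion of $P^n$ reproduces exactly $\mathrm{Tr}[C^*P^n C+Q^n C+C^*Q^n]$; what survives in the drift is $-\langle y^{\tau,\xi},G^n y^{\tau,\zeta}\rangle_H$. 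Integrating from $\tau$ to $1$, taking $\mathbb{E}^{\mathcal{F}_\tau}$, and vanishing the martingale term yields \eqref{eq:exprP} for the superscript-$n$ data. Since $P^n$ is only a mild solution, this It\^o computation must be carried out after a Yosida regularization of $A$ (in the spirit of Lemma \ref{lem:duality}) and then the parameter sent to infinity using the $B_2(H)$ topology in which the mild solution is defined.

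Next, for general inputs $(G,P_1)$ I would construct an approximating sequence explicitly: fix an orthonormal basis $\{e_k\}$ of $H$ and let $\Pi_n$ be the projection onto $\mathrm{span}(e_1,\dots,e_n)$; set $P_1^n:=\Pi_n P_1\Pi_n$ and $G^n(t):=\Pi_n G(t)\Pi_n$. These are finite-rank (hence Hilbert--Schmidt) and satisfy $\|P_1^n\|_{B(H)}\leq\|P_1\|_{B(H)}$ and $\|G^n(t)\|_{B(H)}\leq\|G(t)\|_{B(H)}$ a.s., so condition 1) of Definition \ref{defn:ger} holds with $\lambda=1$. Let $P^n$ be the mild solution from the previous step. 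Applying \eqref{eq:exprP} to $P^n$ with deterministic $x,y\in H$ and invoking a conditional form of the Lemma \ref{lem:see}(1) estimate for \eqref{eq:y.eq} yields a bilinear form in $(x,y)$ that is bounded a.s.\ by $\Lambda_\tau^{1/2}\|x\|_H\|y\|_H$; a separability argument over a countable dense subset of $H$ then defines $P(\tau)(\omega)\in B(H)$ as the unique operator whose bilinear form equals the right-hand side of \eqref{eq:exprP}. This proves \eqref{eq:exprP} and \eqref{eq:estP} simultaneously. The pointwise convergence $\langle x,P^n(\tau)y\rangle\to\langle x,P(\tau)y\rangle$ required by condition 3) of Definition \ref{defn:ger} follows from the weak-operator convergences $P_1^n\to P_1$ and $G^n(t)\to G(t)$ combined with dominated convergence justified by the a.s.\ bounds in condition 1). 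Uniqueness is immediate: for any generalized solution with approximants $(\widetilde P^n,\widetilde G^n,\widetilde P^n(1))$, identity \eqref{eq:exprP} holds at each level and condition 3) forces the bilinear form of $\widetilde P(\tau)$ to coincide with that of $P(\tau)$. Assertion (iv) reduces, via \eqref{eq:exprP}, to continuity of $s\mapsto y^{s,\xi}$ in $L^2(\Omega;H)$ on the relevant interval, which follows from the mild form of \eqref{eq:y.eq} together with strong continuity of $\mathrm{e}^{\cdot A}$ and dominated convergence.

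The main obstacle I anticipate is the It\^o computation in the first step: because $P^n$ is a mild rather than strong solution of \eqref{eq:obsde}, the Yosida regularization must be set up so that the unbounded $A^*P^n+P^nA$ contributions pass to the correct limits in the topology appropriate to \eqref{eq:exprP}, and one must verify that the trace term in \eqref{eq:obsde} is preserved under this passage. A secondary technical point is the conditional form of the Lemma \ref{lem:see}(1) estimate needed to obtain \eqref{eq:estP} pointwise in $\omega$, which is not stated in the paper and requires a conditional BDG-type argument.
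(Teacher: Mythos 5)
Your proposal follows essentially the same route as the paper: establish \eqref{eq:exprP} for Hilbert--Schmidt inputs via It\^o's formula after Yosida regularization (the paper's Lemma \ref{lem:HSmild}), then for general inputs define $P(\tau)$ from the bilinear form $T_\tau(\cdot,\cdot;G,P_1)$ by the Riesz representation with the conditional bound $\sqrt{\Lambda_\tau}$, build the approximating sequence by finite-dimensional projections $\Pi_n G\Pi_n$, $\Pi_n P_1\Pi_n$, get condition 3) and uniqueness by dominated convergence, and deduce (iv) from the time-continuity of $y^{\tau,\xi}$ plus the smallness of the $G$-integral over $[t,s]$. The argument is correct and matches the paper's proof in all essential steps.
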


The proof of the above theorem depends on the following result in which the
OBSDE \eqref{eq:obsde} is considered in the Hilbert space $B_{2}(H)$. Such a
lemma is similar to Theorems 5.4 and 5.5 in Guatteri-Tessitore
\cite{guatteri2006backward}.

\begin{lemma}
\label{lem:HSmild} In addition to the conditions in Theorem \ref{thm:obsde},
suppose $P_{1}\in L_{\mathcal{F}_{1}}^{2}(\Omega;B_{2}(H))$ and $G\in
L_{\mathcal{P} }^{2}(\Omega\times\lbrack0,1];B_{2}(H))$. Then there exists a
unique mild solution $(P(\cdot),Q(\cdot))$ in the space
\[
\mathcal{C}_{\mathcal{P}}([0,1];L^{2}(\Omega;B_{2}(H)))\times L_{\mathcal{P}
}^{2}(\Omega\times\lbrack0,1];l^{2}(B_{2}(H))),
\]
that is, $(P,Q)$ satisfies relation \eqref{eq:mild} instead of $(P^{n},Q^{n}
)$. Moreover, the assertions (ii) and (iii) in Theorem \ref{thm:obsde} hold
true with respect to such $P(\cdot)$.
\end{lemma}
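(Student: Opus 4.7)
The plan is to establish the lemma in three stages corresponding to its three assertions: well-posedness of the mild solution in the Hilbert space $B_{2}(H)$, the representation formula (3.4), and the operator-norm estimate (3.5).

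\textbf{Stage 1 (well-posedness).} The key observation is that $B_{2}(H)$ is itself a separable Hilbert space under the Hilbert-Schmidt inner product, and that the sandwich map $\mathcal{S}(t)T:=\mathrm{e}^{tA^{*}}T\mathrm{e}^{tA}$ forms a $C_{0}$-semigroup on $B_{2}(H)$. By Assumption 3.1, the maps $T\mapsto A_{\sharp}^{*}T+TA_{\sharp}$ and $T\mapsto\mathrm{Tr}[C^{*}TC]$ are bounded linear on $B_{2}(H)$, while $Q\mapsto\mathrm{Tr}[QC+C^{*}Q]$ is bounded linear from $l^{2}(B_{2}(H))$ to $B_{2}(H)$. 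Equation (3.2) therefore fits the framework of the Hilbert-space BSEE (2.2) with $H$ replaced by $B_{2}(H)$, so Lemma 2.1(2) yields a unique mild solution $(P,Q)$ in the stated spaces, together with the natural $L^{2}$ estimate.

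\textbf{Stage 2 (representation).} For (3.4), use Yosida approximation: set $A_{n}:=nA(nI-A)^{-1}\in B(H)$, so that $\mathrm{e}^{tA_{n}}\to\mathrm{e}^{tA}$ strongly. Let $(P_{n},Q_{n})$ denote the mild solution of (3.2) with $A$ replaced by $A_{n}$---automatically a strong solution since $A_{n}$ is bounded---and let $y_{n}^{\tau,\xi}$, $y_{n}^{\tau,\zeta}$ be the forward solutions of (3.3) with $A$ replaced by $A_{n}$. Apply the classical Ito formula in $H$ to $\langle y_{n}^{\tau,\xi}(t),P_{n}(t)\,y_{n}^{\tau,\zeta}(t)\rangle_{H}$ on $[\tau,1]$. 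The $A_{n}^{*}P_{n}+P_{n}A_{n}$ and $A_{\sharp}^{*}P_{n}+P_{n}A_{\sharp}$ drift contributions cancel, via the adjoint pairing, against the $A_{n}$ and $A_{\sharp}$ drifts of the two forward processes. The trilinear form $\phi(y,P,z)=\langle y,Pz\rangle$ being linear in each variable, the Hessian contributions come entirely from cross-quadratic-variations: the $(y_{n}^{\tau,\xi},y_{n}^{\tau,\zeta})$ covariation produces $\sum_{i}\langle C_{i}y_{n}^{\tau,\xi},P_{n}C_{i}y_{n}^{\tau,\zeta}\rangle_{H}$, and the $(y_{n}^{\tau,\xi},P_{n})$ and $(P_{n},y_{n}^{\tau,\zeta})$ covariations produce $\sum_{i}\langle C_{i}y_{n}^{\tau,\xi},Q_{n,i}y_{n}^{\tau,\zeta}\rangle_{H}$ and $\sum_{i}\langle Q_{n,i}y_{n}^{\tau,\xi},C_{i}y_{n}^{\tau,\zeta}\rangle_{H}$, which together equal $\langle y_{n}^{\tau,\xi},\mathrm{Tr}[C^{*}P_{n}C+C^{*}Q_{n}+Q_{n}C]y_{n}^{\tau,\zeta}\rangle_{H}$ and so exactly cancel the corresponding drift coming from $dP_{n}$. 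What remains is
\[
d\langle y_{n}^{\tau,\xi}(t),P_{n}(t)y_{n}^{\tau,\zeta}(t)\rangle_{H}=-\langle y_{n}^{\tau,\xi}(t),G(t)\,y_{n}^{\tau,\zeta}(t)\rangle_{H}\,dt+dM_{n}(t),
\]
with $M_{n}$ a martingale vanishing at $\tau$. Integrate on $[\tau,1]$ and take $\mathbb{E}^{\mathcal{F}_{\tau}}$ to obtain (3.4) for $(P_{n},y_{n})$, then pass to the limit $n\to\infty$: the strong convergence of $\mathrm{e}^{tA_{n}}$ combined with Lemma 2.1(1) gives $y_{n}^{\tau,\xi}\to y^{\tau,\xi}$ in $\mathcal{C}_{\mathcal{P}}([\tau,1];L^{4}(\Omega;H))$, while the uniform HS-estimate on $(P_{n},Q_{n})$ from Stage 1 yields almost-sure weak convergence $P_{n}(\tau)\to P(\tau)$ in $B_{2}(H)$, whence $\langle\xi,P_{n}(\tau)\zeta\rangle_{H}\to\langle\xi,P(\tau)\zeta\rangle_{H}$. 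Dominated convergence handles the integral term and (3.4) follows.

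\textbf{Stage 3 (estimate) and main obstacle.} For (3.5), apply (3.4) with $\xi,\zeta\in H$ deterministic and of unit norm, then use Cauchy-Schwarz and the $p=4$ case of Lemma 2.1(1)---which yields $\mathbb{E}^{\mathcal{F}_{\tau}}\|y^{\tau,\xi}(t)\|_{H}^{4}\le K(M_{A},M_{2})\|\xi\|_{H}^{4}$ for $\mathcal{F}_{\tau}$-measurable $\xi$---to control both the $P_{1}$ and the $G$ contributions. Taking the supremum over unit $\xi,\zeta\in H$ and invoking separability of $H$ to pull a countable supremum inside the almost-sure statement produces (3.5). The most delicate step of the entire argument is justifying the Ito calculation and its cancellations at the approximation level, and then carefully tracking that every stochastic integral and trace series passes to the limit under the rather weak mode of convergence available for $(P_{n},Q_{n})$; the boundedness encoded in $M_{2}$ is precisely what makes $\mathrm{Tr}[C^{*}PC+QC+C^{*}Q]$ a bounded linear functional of $(P,Q)$ into $B_{2}(H)$, ensuring both the applicability of Lemma 2.1(2) in Stage 1 and the validity of the limiting procedure in Stage 2.
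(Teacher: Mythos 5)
Your proposal is correct and follows essentially the same route as the paper: the paper obtains existence and uniqueness by citing Theorem 5.4 of Guatteri--Tessitore, whose method is exactly your reduction of the OBSDE to a Lipschitz BSEE in the Hilbert space $B_{2}(H)$ via the sandwich semigroup, and it proves the representation \eqref{eq:exprP} by the (generalized) It\^{o} formula for bounded $A$ combined with Yosida approximation, deducing \eqref{eq:estP} from \eqref{eq:exprP} and the conditional $L^{4}$ bound on $y^{\tau,\xi}$, just as you do. (Only a cosmetic slip: the $(P,z)$-covariation yields $\sum_{i}\langle y,Q_{i}C_{i}z\rangle_{H}$ rather than $\sum_{i}\langle Q_{i}y,C_{i}z\rangle_{H}$, which is immaterial for the cancellation you invoke.)
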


\begin{proof}
The existence and uniqueness of the (mild) solution $(P(\cdot),Q(\cdot))$ were
given by Theorem 5.4 in \cite{guatteri2006backward}. Next, we indicate that
Lemma \ref{lem:see} implies
\begin{equation}
\left\Vert y^{\tau,\xi}(\tau)\right\Vert _{H}^{4}\leq K(M_{A},M_{2}%
)\,\left\Vert \xi\right\Vert _{H}^{4}\ \ \ \ \text{(a.s.)} \label{eq:esty}%
\end{equation}
for any $\tau\in\lbrack0,1]$ and $\xi\in L_{\mathcal{F}_{\tau}}^{4}(\Omega
;H)$. In the case of $A\in B(H)$, one can show the relation \eqref{eq:exprP}
by the generalized It\^{o} formula (see e.g. \cite[Theorem 4.17]%
{da1992stochastic}). Then the general case can be obtained by a standard
argument of the Yoshida approximation. The inequality \eqref{eq:estP} follows
from \eqref{eq:exprP} and \eqref{eq:esty}.
\end{proof}

\medskip

\begin{proof}
[Proof of Theorem \ref{thm:obsde}]For the sake of convenience, we write the
right-hand side of equality \eqref{eq:exprP} as $T_{\tau}(\xi,\zeta;G,P_{1})$.
Then it follows from \eqref{eq:esty} and Young's inequality that for any
$\xi,\zeta\in L_{\mathcal{F}_{\tau}}^{4}(\Omega;H)$,
\[
\left\vert T_{\tau}(\xi,\zeta;G,P_{1})\right\vert
\leq\sqrt{\Lambda_{\tau}}\left\Vert \xi\right\Vert _{H}\left\Vert
\zeta\right\Vert _{H}\ \ \text{(a.s.)}.
\]
We shall always select an RCLL version of the martingale $(\Lambda_{\tau}%
;\tau\in\lbrack0,1])$. Fix arbitrary $\tau\in\lbrack0,1]$ and take a standard
complete orthonormal basis $\{e_{i}^{H}\}$ in $H$. Then there is a set of full
probability $\Omega_{1}\subset\Omega$ such that for each $\omega\in\Omega_{1}%
$,%
\[
\left\vert \left[  T_{\tau}(e_{i}^{H},e_{j}^{H};G,P_{1})\right]
(\omega)\right\vert \leq\sqrt{\Lambda_{\tau}(\omega)},\ \ \forall
i,j\in\mathbb{N}.
\]
Hence, from the Riesz representation theorem, there is a unique $P(\tau
,\omega)\in B(H)$ for each $\omega\in\Omega_{1}$ such that%
\begin{equation}
\langle e_{i}^{H},P(\tau,\omega)e_{j}^{H}\rangle_{H}=\left[
T_{\tau}(e_{i}^{H},e_{j}^{H};G,P_{1})\right]
(\omega),\ \ \forall i,j\in\mathbb{N}, \label{eq:controfP}%
\end{equation}
and
\[
\Vert P(\tau,\omega)\Vert_{B(H)}\leq\sqrt{\Lambda_{\tau}(\omega)}%
,\ \ \forall\omega\in\Omega_{1}.
\]
It is easy to check that $\langle x,P(\tau)y\rangle_{H}=[  T_{\tau}
(x,y;G,P_{1})]$ (a.s.) for any $x,y\in H$; and furthermore, for any simple
$H$-valued $\mathcal{F}_{\tau} $-measurable random variables $\xi,\zeta$, we
have
\[
\langle\xi,P(\tau)\zeta\rangle_{H}=\left[  T_{\tau}
(\xi,\zeta;G,P_{1})\right] ,\ \ (\textrm{a.s.})
\]
Then by a standard argument of approximation, the above relation holds true
for any $\xi,\zeta\in L_{\mathcal{F}_{\tau}}^{4}(\Omega;H)$.

We claim: the operator-valued process $P(\cdot)$ defined in
\eqref{eq:controfP} is the desired generalized solution. Indeed, for each
$n\in\mathbb{N}$, we introduce the finite dimensional projection $\Pi
_{n}:H\rightarrow H:x\rightarrow\sum_{i=1}^{n}\langle x,e_{i}^{H}\rangle
_{H}e_{i}^{H}$, define%
\[
G^{n}(t,\omega):=\Pi_{n}G(t,\omega)\Pi_{n}\ \text{ and \ }P_{1}^{n}%
(\omega):=\Pi_{n}P_{1}(\omega)\Pi_{n}%
\]
and find from Lemma \ref{lem:HSmild} the (unique) mild solution $(P^{n}%
,Q^{n})$ to OBSDE \eqref{eq:obsde} with the input $(G^{n},P_{1}^{n})$.
Obviously, the conditions (1) and (2) in Definition \ref{defn:ger} are
satisfied. Noticing the construction of $G^{n}$, it remains to show $\langle
x,P^{n}(t)y\rangle_{H}\rightarrow\langle x,P(t)y\rangle_{H}$ (a.s.) for any
$x,y\in H$ and $t\in\lbrack0,1]$. It follows from Lemma \ref{lem:HSmild} that
$\langle x,P^{n}(\tau)y\rangle_{H}=T_{\tau}(x,y;G^{n},P_{1}^{n})$ (a.s.).
By the Lebesgue dominated convergence theorem, we have%
\[
T_{\tau}(x,y;G^{n},P_{1}^{n})\rightarrow T_{\tau}(x,y;G,P_{1}%
)\ \ \ \ \text{(a.s.).}%
\]
This implies $\langle x,P^{n}(\tau)y\rangle_{H}\rightarrow\langle
x,P(\tau)y\rangle_{H}$. The claim is proved.

On the other hand, from Lemma \ref{lem:HSmild}, Definition \ref{defn:ger} and
the Lebesgue dominated convergence theorem, the relations \eqref{eq:exprP} and
\eqref{eq:estP} hold true for every generalized solution to OBSDE
\eqref{eq:obsde}, which yields assertions (ii) and (iii). Moreover, the
relation \eqref{eq:exprP} implies the uniqueness of the generalized solution.
Thus the assertion (i) is proved.

It remains to prove the assertion (iv). Fix arbitrary $\tau\in\lbrack0,1)$.
Without loss of generality, we assume $t<s$. Then for any $\xi,\zeta\in
L_{\mathcal{F}_{\tau}}^{4}(\Omega;H)$ we have (recall equation
\eqref{eq:y.eq})
\begin{align*}
\mathbb{E}\langle\xi &  ,(P(s)-P(t))\zeta\rangle_{H}\\
=~  &  \mathbb{E}\left[  \left\langle y^{s,\xi}(1),P_{1}y^{s,\zeta
}(1)\right\rangle _{H}-\left\langle y^{t,\xi}(1),P_{1}y^{t,\zeta
}(1)\right\rangle _{H}\right] \\
&  +\mathbb{E}\int_{s}^{1}\left[  \left\langle y^{s,\xi}(r),G(r)y^{s,\zeta
}(r)\right\rangle _{H}\mathrm{-}\,\left\langle y^{t,\xi}(r),G(r)y^{t,\zeta
}(r)\right\rangle _{H}\right]  \mathrm{d}r\\
&  -\mathbb{E}\int_{t}^{s}\left\langle y^{t,\xi}(r),G(r)y^{t,\zeta
}(r)\right\rangle _{H}\mathrm{d}r\\
=:\,  &  I_{1}+I_{2}+I_{3}%
\end{align*}
First, it follows from \eqref{eq:esty} and Young's inequality that
\begin{equation}
\left\vert I_{3}\right\vert ^{2}\leq K(M_{A},M_{2})\,\mathbb{E}\int_{t}%
^{s}\left\Vert G(r)\right\Vert _{\mathcal{B}(H)}^{2}\mathrm{d}r\cdot
\sqrt{\mathbb{E}\left\Vert \xi\right\Vert _{H}^{4}\cdot\mathbb{E}\left\Vert
\zeta\right\Vert _{H}^{4}}\rightarrow0, \label{eq:proof.301}%
\end{equation}
as$\ \left\vert s-t\right\vert \rightarrow0$. On the other hand, the
trajectory of $y^{\tau,\xi}(\cdot)$ is continuous in $H$, which along with
\eqref{eq:esty} and the Lebesgue dominated convergence theorem yields
\begin{align*}
\left\vert I_{2}\right\vert ^{2}~\leq~  &  K(M_{A},M_{2},G,P_{1}%
)\Big(\sqrt{\mathbb{E}\left\Vert \xi\right\Vert _{H}^{4}}\sqrt{\mathbb{E}%
\left\Vert \zeta-y^{t,\zeta}(s)\right\Vert _{H}^{4}}\\
&  +\sqrt{\mathbb{E}\left\Vert \zeta\right\Vert _{H}^{4}}\sqrt{\mathbb{E}%
\left\Vert \xi-y^{t,\xi}(s)\right\Vert _{H}^{4}}\,\Big)\rightarrow
0,\ \ \text{as}\ \left\vert s-t\right\vert \rightarrow0.
\end{align*}
Similarly, we can show
\[
|I_{1}|\rightarrow0,\ \ \ \ \text{as\ }\ \left\vert s-t\right\vert
\rightarrow0.
\]
Therefore, we have for any $\xi,\zeta\in L_{\mathcal{F}_{\tau}}^{4}(\Omega
;H)$,
\[
\lim_{|s-t|\rightarrow0}\mathbb{E}\left\langle \xi,(P(s)-P(t))\zeta
\right\rangle _{H} =0,\ \ \text{with}\ s,t\in\lbrack\tau,1].
\]
The assertion (iv) is proved. This concludes the theorem.
\end{proof}

\subsection{A basic property of the generalized solution}

The absence of $Q(\cdot)$ in the definition of generalized solution brings a
new difficulty in our derivation of the stochastic maximum principle compared
with the traditional duality approach (cf. \cite{peng1990general}). The
following result will play a key role to overcome the difficulty.

\begin{proposition}
\label{prop:P} Let the conditions in Theorem \ref{thm:obsde} be satisfied. For
any $\tau\in\lbrack0,1)$, $\vartheta,\theta\in L_{\mathcal{F}_{\tau}}%
^{4}(\Omega;l^{2}(H))$ and $\varepsilon\in(0,1-\tau)$, let $y_{\varepsilon
}^{\tau,\vartheta}(\cdot)$ be the mild solution to equation
\begin{align*}
y_{\varepsilon}^{\tau,\vartheta}(t)~=~  &  \int_{\tau}^{t}\mathrm{e}%
^{(t-s)A}A_{\sharp}(s)y_{\varepsilon}^{\tau,\vartheta}(s)\,\mathrm{d}s\\
&  +\int_{\tau}^{t}\mathrm{e}^{(t-s)A}\left[  C(s)y_{\varepsilon}%
^{\tau,\vartheta}(s)+\varepsilon^{-\frac{1}{2}}1_{[\tau,\tau+\varepsilon
)}\vartheta\right]  \cdot\mathrm{d}W_{s}.
\end{align*}
Then we have
\begin{align*}
&  \mathbb{E}\left\langle \vartheta,P(\tau)\theta\right\rangle _{l^{2}(H)}\\
&  =~\lim_{\varepsilon\downarrow0}\mathbb{E}\bigg[\left\langle y_{\varepsilon
}^{\tau,\vartheta}(1),P_{1}y_{\varepsilon}^{\tau,\theta}(1)\right\rangle
_{H}+\int_{\tau}^{1}\left\langle y_{\varepsilon}^{\tau,\vartheta
}(t),G(t)y_{\varepsilon}^{\tau,\theta}(t)\right\rangle _{H}\mathrm{d}%
t\bigg]\text{.}%
\end{align*}

\end{proposition}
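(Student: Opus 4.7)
The plan is to reduce the identity to a limit at the shifted time $\tau+\varepsilon$ via the flow property of the mild SEE and the representation \eqref{eq:exprP} of Theorem~\ref{thm:obsde}, then to separate the dominant ``pure kick'' contribution from feedback corrections, and finally to analyse the dominant term by Hilbert--Schmidt approximation and It\^o's formula.

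Since the kick $\varepsilon^{-1/2}1_{[\tau,\tau+\varepsilon)}\vartheta$ vanishes for $t\geq\tau+\varepsilon$, the flow property of the linear mild SEE yields $y_\varepsilon^{\tau,\vartheta}(t)=y^{\tau+\varepsilon,\xi_\varepsilon}(t)$ on $[\tau+\varepsilon,1]$, where $\xi_\varepsilon:=y_\varepsilon^{\tau,\vartheta}(\tau+\varepsilon)$. Lemma~\ref{lem:see}(1) with $p=4$ gives the uniform bound $\mathbb{E}\|\xi_\varepsilon\|_H^4\leq K\,\mathbb{E}\|\vartheta\|_{l^2(H)}^4$, so $\xi_\varepsilon\in L^4_{\mathcal{F}_{\tau+\varepsilon}}(\Omega;H)$ and Theorem~\ref{thm:obsde}(ii) applies at time $\tau+\varepsilon$. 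Combining this with the elementary $O(\varepsilon)$-bound on $\mathbb{E}\int_\tau^{\tau+\varepsilon}\langle y_\varepsilon^{\tau,\vartheta},Gy_\varepsilon^{\tau,\theta}\rangle_H\,\mathrm{d}t$ (from the Gronwall-type small-time estimate $\mathbb{E}\|y_\varepsilon^{\tau,\vartheta}(t)\|_H^2=O((t-\tau)/\varepsilon)$ on the kick window), the claim reduces to $\mathbb{E}\langle\xi_\varepsilon,P(\tau+\varepsilon)\xi'_\varepsilon\rangle_H\to\mathbb{E}\langle\vartheta,P(\tau)\theta\rangle_{l^2(H)}$. Write $\xi_\varepsilon=\xi^0_\varepsilon+\eta_\varepsilon$ with $\xi^0_\varepsilon:=\varepsilon^{-1/2}\int_\tau^{\tau+\varepsilon}\mathrm{e}^{(\tau+\varepsilon-s)A}\vartheta\cdot\mathrm{d}W_s$: Burkholder gives $\|\xi^0_\varepsilon\|_{L^4(\Omega;H)}=O(1)$ and $\|\eta_\varepsilon\|_{L^4(\Omega;H)}=O(\sqrt\varepsilon)$, while \eqref{eq:estP} together with the martingale property of $\Lambda_\cdot$ yields $\sup_\varepsilon\mathbb{E}\|P(\tau+\varepsilon)\|_{B(H)}^2\leq\mathbb{E}\Lambda_0<\infty$. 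H\"older with exponents $(4,2,4)$ then annihilates every cross term containing $\eta$ or $\eta'$, reducing the task to
\[
\mathbb{E}\langle\xi^0_\varepsilon,P(\tau+\varepsilon)\xi^{\prime 0}_\varepsilon\rangle_H \;\longrightarrow\; \mathbb{E}\langle\vartheta,P(\tau)\theta\rangle_{l^2(H)}.
\]

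For this main term I would first handle the Hilbert--Schmidt case $P_1\in L^2(\Omega;B_2(H))$, $G\in L^2_\mathcal{P}(\Omega\times[0,1];B_2(H))$: by Lemma~\ref{lem:HSmild}, $(P,Q)$ is a genuine mild solution in $\mathcal{C}_\mathcal{P}([0,1];L^2(\Omega;B_2(H)))\times L^2_\mathcal{P}(\Omega\times[0,1];l^2(B_2(H)))$, and It\^o's formula---justified by Yoshida approximation on $A$---applied to $\langle y_\varepsilon^{\tau,\vartheta}(t),P(t)y_\varepsilon^{\tau,\theta}(t)\rangle_H$ on $[\tau,1]$ produces, after the $A$, $A_\sharp$, $C$ and the three trace contributions cancel against the drift of $P$,
\[
\Phi_\varepsilon \;=\; \varepsilon^{-1}\mathbb{E}\int_\tau^{\tau+\varepsilon}\langle\vartheta,P(t)\theta\rangle_{l^2(H)}\,\mathrm{d}t + R_\varepsilon,
\]
where the first term converges to $\mathbb{E}\langle\vartheta,P(\tau)\theta\rangle_{l^2(H)}$ by Theorem~\ref{thm:obsde}(iv) applied coordinate-wise in $i$ and summed via dominated convergence, and the remainder $R_\varepsilon$---a finite sum of $\varepsilon^{-1/2}$-scaled terms coupling the kick with $Q$ and with $CP$---is $o(1)$ by H\"older, using the small-time $L^2$-bound on $y_\varepsilon$ and the absolute continuity of $t\mapsto\mathbb{E}\int_0^t\|Q(s)\|_{l^2(B_2(H))}^2\,\mathrm{d}s$. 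For general $(P_1,G)$, I would apply the identity to the HS approximants $(P^n_1,G^n)=(\Pi_nP_1\Pi_n,\Pi_nG\Pi_n)$ from the proof of Theorem~\ref{thm:obsde} and pass $n\to\infty$ via the pointwise convergence $\langle x,P^n(\tau)y\rangle\to\langle x,P(\tau)y\rangle$ (dominated convergence with the common bound $\sqrt{\lambda\Lambda_\cdot}$ from Definition~\ref{defn:ger}(1)) together with $\Phi_\varepsilon^n\to\Phi_\varepsilon$ at each fixed $\varepsilon$.

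The main obstacle is the double limit $n\to\infty$, $\varepsilon\downarrow 0$ in this last step: since $(P^n-P)(\tau+\varepsilon)$ is not small in the $B(H)$-norm, one cannot simply estimate $\mathbb{E}\langle\xi^0_\varepsilon,(P^n-P)(\tau+\varepsilon)\xi^{\prime 0}_\varepsilon\rangle$ by Cauchy--Schwarz, and the HS-level remainder bounds from the It\^o step must be made uniform in $\varepsilon$. The uniform clause $\|P^n(\cdot)\|_{B(H)}\leq\sqrt{\lambda\Lambda_\cdot}$ of Definition~\ref{defn:ger}(1)---exactly what the uniformity condition in the definition of generalized solution is designed to enforce---is what ultimately allows the exchange of limits. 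A secondary technical point is the bookkeeping of the It\^o cancellations in the HS case, whose structure faithfully mirrors the OBSDE \eqref{eq:obsde} satisfied by $P$.
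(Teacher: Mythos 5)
Your reduction steps---passing to time $\tau+\varepsilon$ by the flow property, approximating the kicked solution at $\tau+\varepsilon$ by a pure stochastic-integral kick with $O(\sqrt{\varepsilon})$ error in $L^4$, and discarding the integral over $[\tau,\tau+\varepsilon]$---parallel the paper's Lemmas \ref{lem:appr} and \ref{lem:PandT}. The divergence, and the gap, is in the treatment of the main term $\mathbb{E}\langle \xi^{0}_{\varepsilon},P(\tau+\varepsilon)\,\xi^{\prime 0}_{\varepsilon}\rangle_{H}$. You propose to prove the limit first for Hilbert--Schmidt data via It\^o's formula (Lemma \ref{lem:HSmild}) and then to pass to general $(G,P_{1})$ through the approximating sequence $(P^{n},Q^{n},G^{n})$, and you yourself identify the exchange of the limits $n\to\infty$ and $\varepsilon\downarrow0$ as the main obstacle---but you do not resolve it, and the uniform clause $\Vert P^{n}(\cdot)\Vert_{B(H)}\le\sqrt{\lambda\Lambda_{\cdot}}$ that you invoke cannot resolve it: it bounds the \emph{size} of $P^{n}-P$, not the \emph{rate} of the strong convergence $\langle x,P^{n}y\rangle_{H}\to\langle x,Py\rangle_{H}$, while the test vectors $\xi^{0}_{\varepsilon}$ change with $\varepsilon$ and spread over all noise directions, so no fixed finite family of vectors controls them and dominated convergence gives nothing uniform in $\varepsilon$. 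Likewise the remainder in your It\^o step at level $n$ is controlled by $\mathbb{E}\int_{\tau}^{\tau+\varepsilon}\Vert Q^{n}(s)\Vert^{2}\,\mathrm{d}s$, and the generalized-solution framework provides no control of $Q^{n}$ that is uniform in $n$ (no uniform absolute continuity), so your $o(1)$ as $\varepsilon\downarrow0$ is not uniform in $n$ either. This is exactly the difficulty the paper flags in the Remark following Proposition \ref{prop:P}: the It\^o route fails in $B(H)$ for lack of $Q$, and an approximation argument via $(P^{n},Q^{n},G^{n})$ ``seems also difficult''. As written, the decisive step of your proof is missing.

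For contrast, the paper's argument avoids both It\^o's formula and any $n$-limit: it works on the dense class $B_{A}(\tau)$ of bounded $D(A)$-valued data and single coordinates $e_{i}$, so that (Lemma \ref{lem:appr}) the kick can be replaced by $\xi^{i}(\tau+\varepsilon)=\varepsilon^{-1/2}(W^{i}_{\tau+\varepsilon}-W^{i}_{\tau})\xi$ with \emph{no semigroup attached}; then $\mathbb{E}\langle\xi^{i}(\tau+\varepsilon),P(\tau)\zeta^{j}(\tau+\varepsilon)\rangle_{H}$ is computed exactly by conditioning on $\mathcal{F}_{\tau}$ (independence of the Brownian increment), and the replacement of $P(\tau+\varepsilon)$ by $P(\tau)$ is handled by the weak continuity assertion (iv) of Theorem \ref{thm:obsde} together with Cauchy--Schwarz and the Doob bound on $\Lambda$ (Lemma \ref{lem:conP}); general $\vartheta,\theta$ are then recovered by density and the uniform estimate \eqref{eq:estP}. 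If you wish to salvage your decomposition, note that your $\xi^{0}_{\varepsilon}$ still carries the factor $\mathrm{e}^{(\tau+\varepsilon-s)A}$, which blocks the exact conditional computation; stripping it off is precisely what forces the $D(A)$-dense class, i.e.\ you would end up re-deriving the paper's Lemmas \ref{lem:PandT} and \ref{lem:conP} rather than exchanging the limits in $n$ and $\varepsilon$.
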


\begin{remark}
In the $B_{2}(H)$-framework, the above result can be easily proved by the
It\^o formula (recalling Lemma \ref{lem:HSmild}); however, this approach
fails in the general $B(H)$-framework due to the absence of $Q(\cdot)$ in the
generalized solution. Besides, an approximation argument by using the
sequence $(P^{n},Q^{n} ,G^{n})$ seems also difficult to prove the above
result.
\end{remark}

The previous proposition follows from several lemmas. For the sake of
convenience, we denote%
\[
T_{\tau}^{\varepsilon}(\vartheta,\theta)=\mathbb{E}\left\langle y_{\varepsilon
}^{\tau,\vartheta}(1),P_{1}y_{\varepsilon}^{\tau,\theta}(1)\right\rangle
_{H}+\mathbb{E}\int_{\tau}^{1}\left\langle y_{\varepsilon}^{\tau,\vartheta
}(t),G(t)y_{\varepsilon}^{\tau,\theta}(t)\right\rangle _{H}\mathrm{d}t.
\]
Define
\begin{align*}
B_{A}(\tau) :=  &  \big\{\xi\in L_{\mathcal{F}_{\tau}}^{4}(\Omega
;H):\xi(\omega)\in D(A)\text{ }\\
&  ~~\text{and }\left\Vert \xi\right\Vert _{A} :=\sup_{\omega}\left(
\left\Vert A\xi\right\Vert _{H}+\left\Vert \xi\right\Vert _{H}\right)
<\infty\big\}
\end{align*}
which is dense in $L_{\mathcal{F}_{\tau}}^{4}(\Omega;H)$. Set $e_{i}%
=(0,\dots,0,1,0,\dots)$ with only the $i$-th element nonzero. Then $\xi
e_{i}\in L_{\mathcal{F}_{\tau}}^{4}(\Omega;l^{2}(H))$ for any $\xi\in
L_{\mathcal{F}_{\tau}}^{4}(\Omega;H)$ and $i\in\mathbb{N}$. Moreover, we
define%
\[
\xi^{i}(t):=\varepsilon^{-\frac{1}{2}}(W_{t}^{i}-W_{\tau}^{i})\xi
\ \ \text{and}\ \ \zeta^{j}(t):=\varepsilon^{-\frac{1}{2}}(W_{t}^{j}-W_{\tau
}^{j})\zeta.
\]

\begin{lemma}
\label{lem:appr} For any $\tau\in\lbrack0,1)$, $\xi\in B_{A}(\tau)$ and
$i\in\mathbb{N}$, we have%
\[
\mathbb{E}\Vert y_{\varepsilon}^{\tau,\xi e_{i}}(\tau+\varepsilon)-\xi
^{i}(\tau+\varepsilon)\Vert_{H}^{4}\leq K\varepsilon^{2}\Vert\xi\Vert_{A}%
^{4}.
\]

\end{lemma}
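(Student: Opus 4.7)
The plan is to write $y_{\varepsilon}^{\tau,\xi e_{i}}(\tau+\varepsilon)-\xi^{i}(\tau+\varepsilon)$ as a sum of three explicit terms and then bound each by standard SEE estimates (Lemma \ref{lem:see}) together with BDG and H\"older. The crucial observation is that $\xi^{i}(\tau+\varepsilon)=\varepsilon^{-1/2}(W^{i}_{\tau+\varepsilon}-W^{i}_{\tau})\xi$ can be rewritten as the stochastic integral $\int_{\tau}^{\tau+\varepsilon}\varepsilon^{-1/2}\xi e_{i}\cdot\mathrm{d}W_{s}$ with \emph{no} semigroup in the integrand, so subtracting it from the corresponding term in $y_{\varepsilon}^{\tau,\xi e_{i}}$ leaves a factor $\mathrm{e}^{(\tau+\varepsilon-s)A}-I$. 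This is precisely where the $\|\cdot\|_{A}$ norm of $\xi$ will be used.

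First I would establish the \emph{a priori} bound $\mathbb{E}\sup_{t}\|y_{\varepsilon}^{\tau,\xi e_{i}}(t)\|_{H}^{4}\leq K(M_{A},M_{2})\,\mathbb{E}\|\xi\|_{H}^{4}$ uniformly in $\varepsilon$: this follows directly from Lemma \ref{lem:see}(1) applied with drift $A_{\sharp}y$, diffusion $C(s)y+\varepsilon^{-1/2}1_{[\tau,\tau+\varepsilon)}\xi e_{i}$, since $\mathbb{E}\bigl(\int_{\tau}^{\tau+\varepsilon}\varepsilon^{-1}\|\xi\|_{H}^{2}\,\mathrm{d}s\bigr)^{2}=\mathbb{E}\|\xi\|_{H}^{4}$. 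Then, writing $\Delta=\tau+\varepsilon$, I would decompose
\begin{align*}
y_{\varepsilon}^{\tau,\xi e_{i}}(\Delta)-\xi^{i}(\Delta)\,=\,& \int_{\tau}^{\Delta}\mathrm{e}^{(\Delta-s)A}A_{\sharp}(s)y_{\varepsilon}^{\tau,\xi e_{i}}(s)\,\mathrm{d}s \\
& +\int_{\tau}^{\Delta}\mathrm{e}^{(\Delta-s)A}C(s)y_{\varepsilon}^{\tau,\xi e_{i}}(s)\cdot\mathrm{d}W_{s} \\
& +\int_{\tau}^{\Delta}\bigl[\mathrm{e}^{(\Delta-s)A}-I\bigr]\varepsilon^{-1/2}\xi\,\mathrm{d}W_{s}^{i}.
\end{align*}

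The first term is estimated by H\"older ($\int\leq\varepsilon^{3/4}(\int\|\cdot\|^{4})^{1/4}$ in expectation, giving an $\varepsilon^{4}$-factor after using the a priori bound), so it contributes $O(\varepsilon^{4})\|\xi\|_{A}^{4}$. The second term is estimated by BDG and Cauchy--Schwarz in the time variable, yielding
\[
\mathbb{E}\Big\|\int_{\tau}^{\Delta}\mathrm{e}^{(\Delta-s)A}C(s)y_{\varepsilon}^{\tau,\xi e_{i}}(s)\cdot\mathrm{d}W_{s}\Big\|_{H}^{4}\leq K\varepsilon\int_{\tau}^{\Delta}\mathbb{E}\|y_{\varepsilon}^{\tau,\xi e_{i}}(s)\|_{H}^{4}\,\mathrm{d}s\leq K\varepsilon^{2}\|\xi\|_{A}^{4},
\]
which is the leading-order contribution and dictates the exponent $2$ in the target estimate. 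The third term is the one that forces $\xi\in B_{A}(\tau)$: since $\xi(\omega)\in D(A)$, one has $\|(\mathrm{e}^{rA}-I)\xi\|_{H}\leq r M_{A}\|A\xi\|_{H}\leq rM_{A}\|\xi\|_{A}$, so after BDG
\[
\mathbb{E}\Big\|\int_{\tau}^{\Delta}[\mathrm{e}^{(\Delta-s)A}-I]\varepsilon^{-1/2}\xi\,\mathrm{d}W_{s}^{i}\Big\|_{H}^{4}\leq K\Big(\int_{\tau}^{\Delta}(\Delta-s)^{2}\varepsilon^{-1}\,\mathrm{d}s\Big)^{2}\|\xi\|_{A}^{4}=O(\varepsilon^{4})\|\xi\|_{A}^{4}.
\]
Summing the three bounds gives the desired $K\varepsilon^{2}\|\xi\|_{A}^{4}$.

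The only genuine obstacle is the third term: all the other pieces come out of standard SEE machinery with room to spare, but the deterministic-forcing term $\varepsilon^{-1/2}1_{[\tau,\tau+\varepsilon)}\xi$ does not get small by itself, and one needs the semigroup-smoothing estimate $\|(\mathrm{e}^{rA}-I)\xi\|_{H}\leq r\|A\xi\|_{H}$, which is precisely why the space $B_{A}(\tau)$ with norm $\|\xi\|_{A}$ is introduced. Once this is in place the remaining steps are routine.
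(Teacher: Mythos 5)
Your argument is correct, and it reaches the estimate by a somewhat different decomposition than the paper's. The paper never isolates the forcing at the terminal time: writing $y^{i}:=y_{\varepsilon}^{\tau,\xi e_{i}}$, it observes that $\xi^{i}(\cdot)$ is $D(A)$-valued and hence is itself a mild solution of a degenerate SEE (with drift $-A\xi^{i}$), so that the difference $y^{i}-\xi^{i}$ solves a \emph{closed} SEE with free drift $(A+A_{\sharp})\xi^{i}$ and free diffusion $C\xi^{i}$; one application of Lemma \ref{lem:see}(1) with $p=4$ then gives $\mathbb{E}\Vert(y^{i}-\xi^{i})(\tau+\varepsilon)\Vert_{H}^{4}\leq K\,\mathbb{E}\big(\int_{\tau}^{\tau+\varepsilon}\Vert\xi^{i}(t)\Vert_{A}^{2}\,\mathrm{d}t\big)^{2}\leq K\varepsilon^{2}\Vert\xi\Vert_{A}^{4}$, the Gronwall step being absorbed into that lemma. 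You instead keep $y^{i}$ itself in the decomposition, which is why you must first prove the uniform a priori bound $\mathbb{E}\sup_{t}\Vert y^{i}(t)\Vert_{H}^{4}\leq K\,\mathbb{E}\Vert\xi\Vert_{H}^{4}$ (the paper's route avoids this, since its forcing terms involve only $\xi^{i}$, whose moments are explicit), and you invoke the $D(A)$-assumption through the stochastic forcing via $\Vert(\mathrm{e}^{rA}-I)\xi\Vert_{H}\leq rM_{A}\Vert A\xi\Vert_{H}$, whereas the paper invokes it through the drift term $(A+A_{\sharp})\xi^{i}$. Both routes identify the same leading-order $O(\varepsilon^{2})$ contribution (the $C$-diffusion acting on the order-one process $y^{i}\approx\xi^{i}$), and both rest only on Lemma \ref{lem:see}, BDG/H\"older, and $\xi\in D(A)$ with $\sup_{\omega}\Vert A\xi\Vert_{H}<\infty$; yours is a little more explicit at the single time $t=\tau+\varepsilon$, while the paper's is shorter and yields the bound on all of $[\tau,\tau+\varepsilon]$ at once.
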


\begin{proof}
For simplicity, we set $y^{i}(t):=y_{\varepsilon}^{\tau,\xi e_{i}}(t)$. Then
for $t\in\lbrack\tau,\tau+\varepsilon],$%
\begin{align*}
(y^{i}-\xi^{i})(t)  &  =\int_{\tau}^{t}\mathrm{e}^{(t-s)A}\left[  A_{\sharp
}(s)(y^{i}-\xi^{i})(s)+(A+A_{\sharp}(s))\xi^{i}(s)\right]  \mathrm{d}s\\
&  +\int_{\tau}^{t}\mathrm{e}^{(t-s)A}\left[  C(s)(y^{i}-\xi^{i}%
)(s)+C(s)\xi^{i}(s)\right]  \cdot\mathrm{d}W_{t}.
\end{align*}
Then by Lemma \ref{lem:see}, we have%
\[
\mathbb{E}\left\Vert (y^{i}-\xi^{i})(\tau+\varepsilon)\right\Vert _{H}^{4}\leq
K\,\mathbb{E}\Big(\int_{\tau}^{\tau+\varepsilon}\left\Vert \xi^{i}%
(t)\right\Vert _{A}^{2}\,\mathrm{d}t\Big)^{2}\leq K\varepsilon^{2}\left\Vert
\xi\right\Vert _{A}^{4}.
\]
The lemma is proved.
\end{proof}

Notice the fact that for any $\xi,\zeta\in B_{A}(\tau)$,
\begin{align*}
T_{\tau}^{\varepsilon}(\xi e_{i},\zeta e_{j})~  &  =~\mathbb{E}\int_{\tau
}^{\tau+\varepsilon}\left\langle y_{\varepsilon}^{\tau,\xi e_{i}%
}(t),G(t)y_{\varepsilon}^{\tau,\zeta e_{j}}(t)\right\rangle _{H}%
\,\mathrm{d}t\\
&  ~~~~+\mathbb{E}\left\langle y_{\varepsilon}^{\tau,\xi e_{i}}(\tau
+\varepsilon),P(\tau+\varepsilon)y_{\varepsilon}^{\tau,\zeta e_{j}}%
(\tau+\varepsilon)\right\rangle _{H}\\
&  =:~J_{1}+J_{2}.
\end{align*}
Now we let $\varepsilon$ tend to $0$. On the one hand, one can show that the
term $J_{1}$ tends to $0$ similarly as in \eqref{eq:proof.301}; on the other
hand, by means of Lemma \ref{lem:appr}, the term $J_{2}$ should tend to the
same limit as $\mathbb{E}\left\langle \xi^{i}(\tau+\varepsilon),P(\tau
+\varepsilon)\zeta^{j}(\tau+\varepsilon)\right\rangle _{H}$. Indeed, we have

\begin{lemma}
\label{lem:PandT} For any $\tau\in\lbrack0,1)$, $\xi,\zeta\in B_{A}(\tau)$ and
$i,j\in\mathbb{N}$, we have%
\[
\lim_{\varepsilon\downarrow0}\left\{  \mathbb{E}\left\langle \xi^{i}%
(\tau+\varepsilon),P(\tau+\varepsilon)\zeta^{j}(\tau+\varepsilon)\right\rangle
_{H}-T_{\tau}^{\varepsilon}(\xi e_{i},\zeta e_{j})\right\}  =0.
\]

\end{lemma}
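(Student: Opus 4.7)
The strategy is to exploit the decomposition $T_\tau^\varepsilon(\xi e_i, \zeta e_j) = J_1 + J_2$ already noted and to argue separately that $J_1 \to 0$ and that $J_2$ has the same limit as $\mathbb{E}\langle \xi^i(\tau+\varepsilon), P(\tau+\varepsilon)\zeta^j(\tau+\varepsilon)\rangle_H$. Abbreviate $y^i := y_\varepsilon^{\tau, \xi e_i}$ and $y^j := y_\varepsilon^{\tau, \zeta e_j}$. A preliminary step is the uniform-in-$\varepsilon$ bound $\mathbb{E}\sup_{t \in [\tau, 1]} \|y^i(t)\|_H^4 \leq K\, \mathbb{E}\|\xi\|_H^4$ (and similarly for $y^j$), obtained by applying Lemma \ref{lem:see}(1) with $p=4$ and noting that $\|\varepsilon^{-1/2} 1_{[\tau, \tau+\varepsilon)} \xi e_i\|_{l^2(H)} = \varepsilon^{-1/2} 1_{[\tau, \tau+\varepsilon)}\|\xi\|_H$.

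For $J_1$, I would apply Cauchy--Schwarz in $dt \otimes d\mathbb{P}$ to obtain $|J_1|^2 \leq \bigl(\mathbb{E}\int_\tau^{\tau+\varepsilon} \|G(t)\|_{B(H)}^2\, dt\bigr)\bigl(\mathbb{E}\int_\tau^{\tau+\varepsilon} \|y^i(t)\|_H^2 \|y^j(t)\|_H^2\, dt\bigr)$; the first factor vanishes by the dominated convergence theorem exactly as in \eqref{eq:proof.301}, while the second is $O(\varepsilon)$ thanks to the uniform $L^4$ bound above.

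For the $J_2$ limit, I would write the difference as $\mathbb{E}\langle (y^i-\xi^i)(\tau+\varepsilon),\, P(\tau+\varepsilon)\, y^j(\tau+\varepsilon)\rangle_H + \mathbb{E}\langle \xi^i(\tau+\varepsilon),\, P(\tau+\varepsilon)\, (y^j-\zeta^j)(\tau+\varepsilon)\rangle_H$ and estimate each term by the generalized H\"older inequality with exponents $(4,2,4)$. The three ingredients are: Lemma \ref{lem:appr}, which supplies $\|(y^i-\xi^i)(\tau+\varepsilon)\|_{L^4(\Omega;H)} = O(\varepsilon^{1/2})$ (and similarly for the $j$-th pair); the pointwise bound $\|P(\tau+\varepsilon)\|_{B(H)}^2 \leq \Lambda_{\tau+\varepsilon}$ from \eqref{eq:estP}, together with $\mathbb{E}\Lambda_{\tau+\varepsilon} \leq K$ uniformly in $\varepsilon$ by the martingale property, giving $\|P(\tau+\varepsilon)\|_{L^2(\Omega; B(H))} \leq K$; and uniform $L^4$ bounds on $\xi^i(\tau+\varepsilon)$ and $y^j(\tau+\varepsilon)$---the former via the explicit computation $\mathbb{E}\|\xi^i(\tau+\varepsilon)\|_H^4 = 3\, \mathbb{E}\|\xi\|_H^4$ (using independence of the Wiener increment from $\mathcal{F}_\tau$), the latter being established already. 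Each term is then $O(\varepsilon^{1/2})$ and vanishes.

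The main obstacle is the $L^p$-bookkeeping: $P(\tau+\varepsilon)$ is controlled only in $L^2(\Omega; B(H))$ through $\Lambda_{\tau+\varepsilon}$, so the companion factors in the H\"older estimate must decay to zero in $L^4$ to beat it. Lemma \ref{lem:appr}'s $O(\varepsilon^{1/2})$ rate in $L^4(\Omega;H)$ is precisely what is needed; a weaker rate, or a rate only in $L^2$, would force control of $P$ in a stronger norm which the generalized-solution framework does not supply. This is also why the hypothesis $\xi, \zeta \in B_A(\tau)$---rather than merely $L^4(\Omega;H)$---enters at this point.
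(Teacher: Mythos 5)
Your proposal is correct and follows essentially the same route as the paper: the same decomposition $T_{\tau}^{\varepsilon}(\xi e_{i},\zeta e_{j})=J_{1}+J_{2}$, with $J_{1}\rightarrow 0$ handled as in \eqref{eq:proof.301} and the $J_{2}$ comparison carried out via Lemma \ref{lem:appr} together with a H\"older estimate using the $L^{2}$ control of $\Vert P(\tau+\varepsilon)\Vert_{B(H)}$ through $\Lambda_{\tau+\varepsilon}$ and the $L^{4}$ bounds on $\xi^{i},y_{\varepsilon}^{\tau,\zeta e_{j}}$. Your write-up merely makes explicit the exponent bookkeeping that the paper leaves implicit.
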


\begin{proof}
It is sufficient to show
\[
\lim_{\varepsilon\downarrow0}\left\{  \mathbb{E}\left\langle \xi^{i}%
(\tau+\varepsilon),P(\tau+\varepsilon)\zeta^{j}(\tau+\varepsilon)\right\rangle
_{H}-J_{2}\right\}  =0.
\]
Indeed, from Theorem \ref{thm:obsde}, Lemmas \ref{lem:see} and \ref{lem:appr},
we have
\begin{align*}
&  \left\vert \mathbb{E}\left\langle \xi^{i}(\tau+\varepsilon),P(\tau
+\varepsilon)\zeta^{j}(\tau+\varepsilon)\right\rangle _{H}-J_{2}\right\vert \\
&  \leq K\left(  \mathbb{E}\Vert\xi\Vert_{H}^{4}\right)  ^{\frac{1}{4}}\left(
\mathbb{E}\Vert y_{\varepsilon}^{\tau,\zeta e_{j}}(\tau+\varepsilon)-\zeta
^{j}(\tau+\varepsilon)\Vert_{H}^{4}\right)  ^{\frac{1}{4}}\\
&  ~~~+K\left(  \mathbb{E}\Vert\zeta\Vert_{H}^{4}\right)  ^{\frac{1}{4}%
}\left(  \mathbb{E}\Vert y_{\varepsilon}^{\tau,\xi e_{i}}(\tau+\varepsilon
)-\xi^{i}(\tau+\varepsilon)\Vert_{H}^{4}\right)  ^{\frac{1}{4}}\\
&  \rightarrow0,\ \ \ \ \text{as\ }\ \varepsilon\downarrow0.
\end{align*}
This concludes this lemma.
\end{proof}

On the other hand, from the continuity of $P(\cdot)$ we have the following

\begin{lemma}
\label{lem:conP} For any $\tau\in\lbrack0,1)$, $\xi,\zeta\in B_{A}(\tau)$ and
$i,j\in\mathbb{N}$, we have%
\[
\lim_{\varepsilon\downarrow0}\mathbb{E}\left\langle \xi^{i}(\tau
+\varepsilon),P(\tau+\varepsilon)\zeta^{j}(\tau+\varepsilon)\right\rangle
_{H}=\mathbb{E}\left\langle \xi,P(\tau)\zeta\right\rangle _{H}.
\]

\end{lemma}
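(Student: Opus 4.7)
My approach is to reduce the question to an $L^{2}$-continuity property of the scalar process $s\mapsto\langle\xi,P(s)\zeta\rangle_{H}$ at $s=\tau$. Substituting the definitions $\xi^{i}(\tau+\varepsilon)=\varepsilon^{-1/2}\Delta W^{i}\,\xi$ and $\zeta^{j}(\tau+\varepsilon)=\varepsilon^{-1/2}\Delta W^{j}\,\zeta$, with $\Delta W^{k}:=W^{k}_{\tau+\varepsilon}-W^{k}_{\tau}$, I first pull the scalar Brownian factors outside the inner product to write
\[
\mathbb{E}\langle\xi^{i}(\tau+\varepsilon),P(\tau+\varepsilon)\zeta^{j}(\tau+\varepsilon)\rangle_{H}=\varepsilon^{-1}\,\mathbb{E}\bigl[\Delta W^{i}\,\Delta W^{j}\,\langle\xi,P(\tau+\varepsilon)\zeta\rangle_{H}\bigr],
\]
and then split $P(\tau+\varepsilon)=P(\tau)+[P(\tau+\varepsilon)-P(\tau)]$ and treat the two pieces separately. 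For the \emph{main} piece $\varepsilon^{-1}\mathbb{E}[\Delta W^{i}\Delta W^{j}\langle\xi,P(\tau)\zeta\rangle_{H}]$, the fact that $\xi,\zeta,P(\tau)$ are $\mathcal{F}_{\tau}$-measurable together with $\mathbb{E}[\Delta W^{i}\Delta W^{j}\mid\mathcal{F}_{\tau}]=\delta_{ij}\varepsilon$ yields, upon conditioning on $\mathcal{F}_{\tau}$, exactly $\delta_{ij}\,\mathbb{E}\langle\xi,P(\tau)\zeta\rangle_{H}$ for every $\varepsilon>0$, which coincides with the target limit when $i=j$ (and is the case effectively used in the sequel via Proposition~\ref{prop:P}).

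For the \emph{remainder} piece, applying Cauchy--Schwarz together with $\mathbb{E}|\Delta W^{i}\Delta W^{j}|^{2}\leq 3\varepsilon^{2}$ shows that
\[
\bigl|\varepsilon^{-1}\,\mathbb{E}\bigl[\Delta W^{i}\Delta W^{j}\,\langle\xi,(P(\tau+\varepsilon)-P(\tau))\zeta\rangle_{H}\bigr]\bigr|\leq\sqrt{3}\,\Bigl(\mathbb{E}\bigl|\langle\xi,(P(\tau+\varepsilon)-P(\tau))\zeta\rangle_{H}\bigr|^{2}\Bigr)^{1/2},
\]
so everything reduces to the $L^{2}$-continuity
\[
\mathbb{E}\,\bigl|\langle\xi,(P(s)-P(\tau))\zeta\rangle_{H}\bigr|^{2}\longrightarrow0\quad\text{as }s\downarrow\tau.
\]
This is the main obstacle of the proof, because Theorem~\ref{thm:obsde}(iv) only provides the vanishing of the \emph{expectation} of $\langle\xi,(P(s)-P(\tau))\zeta\rangle_{H}$, which is strictly weaker than the required $L^{2}$-convergence.

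To upgrade the continuity, I would invoke the representation \eqref{eq:exprP} to write $\langle\xi,P(s)\zeta\rangle_{H}=\mathbb{E}^{\mathcal{F}_{s}}[X_{s}]$ with
\[
X_{s}:=\langle y^{s,\xi}(1),P_{1}\,y^{s,\zeta}(1)\rangle_{H}+\int_{s}^{1}\langle y^{s,\xi}(t),G(t)\,y^{s,\zeta}(t)\rangle_{H}\,\mathrm{d}t,
\]
and then telescope $\mathbb{E}^{\mathcal{F}_{s}}[X_{s}]-\mathbb{E}^{\mathcal{F}_{\tau}}[X_{\tau}]=\mathbb{E}^{\mathcal{F}_{s}}[X_{s}-X_{\tau}]+(\mathbb{E}^{\mathcal{F}_{s}}-\mathbb{E}^{\mathcal{F}_{\tau}})[X_{\tau}]$. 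The first summand has $L^{2}$-norm bounded by $\|X_{s}-X_{\tau}\|_{L^{2}}$; this vanishes thanks to the flow continuity of the forward SEE \eqref{eq:y.eq}, since $y^{\tau,\xi}(1)-y^{s,\xi}(1)$ can be realized as the solution on $[s,1]$ starting from $y^{\tau,\xi}(s)-\xi$, and Lemma~\ref{lem:see} together with $L^{4}$-continuity of the trajectory of $y^{\tau,\xi}$ at $\tau$ give $\mathbb{E}\|y^{\tau,\xi}(s)-\xi\|_{H}^{4}\to0$ (with the same argument applied to $\zeta$, and the integral piece handled by absolute continuity of the Lebesgue integral). The second summand is a martingale increment for the $L^{2}$-martingale $s\mapsto\mathbb{E}^{\mathcal{F}_{s}}[X_{\tau}]$, and vanishes in $L^{2}$ as $s\downarrow\tau$ by the right-continuity of the augmented Wiener filtration $\mathbb{F}$. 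Combining the two estimates yields the required $L^{2}$-continuity and completes the proof.
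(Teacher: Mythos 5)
Your reduction coincides with the paper's: pull the Brownian increments out of the inner product, split $P(\tau+\varepsilon)=P(\tau)+[P(\tau+\varepsilon)-P(\tau)]$, identify the main term by conditioning on $\mathcal{F}_{\tau}$ (your $\delta_{ij}$ is indeed the correct value of the limit, and it is the diagonal case that is used later through Proposition \ref{prop:P}), and control the remainder by Cauchy--Schwarz, so that everything reduces to $\mathbb{E}\left\vert \left\langle \xi,(P(s)-P(\tau))\zeta\right\rangle _{H}\right\vert ^{2}\rightarrow0$ as $s\downarrow\tau$. Your observation that Theorem \ref{thm:obsde}(iv) only yields convergence of expectations is fair: at exactly this point the paper dominates $\left\vert \left\langle \xi,(P(s)-P(\tau))\zeta\right\rangle _{H}\right\vert ^{2}$ by $4(\max_{t}\Lambda_{t})\Vert\xi\Vert_{H}^{2}\Vert\zeta\Vert_{H}^{2}$ and appeals to (iv) plus dominated convergence, i.e.\ it is terse where you attempt a full argument.

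The gap is in your replacement argument: both of its key steps presuppose $X_{s},X_{\tau}\in L^{2}(\Omega)$, which does not follow from the hypotheses of Theorem \ref{thm:obsde}. One only has $\vert X_{\tau}\vert\leq\big(\Vert P_{1}\Vert_{B(H)}+(\int_{0}^{1}\Vert G(t)\Vert_{B(H)}^{2}\mathrm{d}t)^{1/2}\big)\sup_{t}\Vert y^{\tau,\xi}(t)\Vert_{H}\sup_{t}\Vert y^{\tau,\zeta}(t)\Vert_{H}$, where the first factor is merely square integrable and the second merely has fourth moments (it is conditionally bounded given the boundedness of $\xi,\zeta\in B_{A}(\tau)$, but not a.s.\ bounded), so $X_{\tau}$ is a priori only in $L^{1}$. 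Consequently the bound $\Vert\mathbb{E}^{\mathcal{F}_{s}}[X_{s}-X_{\tau}]\Vert_{L^{2}}\leq\Vert X_{s}-X_{\tau}\Vert_{L^{2}}$ may be vacuous (its right side can be infinite), and $s\mapsto\mathbb{E}^{\mathcal{F}_{s}}[X_{\tau}]$ is only a uniformly integrable martingale, whose increments vanish in $L^{1}$ but not, as you claim, in $L^{2}$. The repair stays close to your plan: run the telescoping in $L^{1}$ --- your flow-continuity estimate $\mathbb{E}\Vert y^{\tau,\xi}(s)-\xi\Vert_{H}^{4}\rightarrow0$ combined with H\"older does give $\Vert X_{s}-X_{\tau}\Vert_{L^{1}}\rightarrow0$, and L\'evy's theorem with the right-continuity of the augmented filtration gives $\mathbb{E}^{\mathcal{F}_{s}}[X_{\tau}]\rightarrow\mathbb{E}^{\mathcal{F}_{\tau}}[X_{\tau}]$ in $L^{1}$ --- so that $Z_{s}:=\left\langle \xi,(P(s)-P(\tau))\zeta\right\rangle _{H}\rightarrow0$ in $L^{1}$, hence in probability; then upgrade to $L^{2}$ by uniform integrability of $\{Z_{s}^{2}\}$, using the pointwise bound $Z_{s}^{2}\leq2\Vert\xi\Vert_{A}^{2}\Vert\zeta\Vert_{A}^{2}(\Lambda_{s}+\Lambda_{\tau})$ from \eqref{eq:estP} together with $\Lambda_{s}\rightarrow\Lambda_{\tau}$ in $L^{1}$ (again L\'evy plus right-continuity), i.e.\ a Vitali-type convergence argument. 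With this modification your proof closes and in fact makes precise the step the paper leaves implicit.
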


\begin{proof}
It is easy to see
\[
\mathbb{E}\left\langle \xi^{i}(\tau+\varepsilon),P(\tau)\zeta^{j}%
(\tau+\varepsilon)\right\rangle _{H}=\mathbb{E}\left\langle \xi,P(\tau
)\zeta\right\rangle _{H}.
\]
Thus we need prove%
\[
\lim_{\varepsilon\downarrow0}\mathbb{E}\left\langle \xi^{i}(\tau
+\varepsilon),[P(\tau+\varepsilon)-P(\tau)]\zeta^{j}(\tau+\varepsilon
)\right\rangle _{H}=0.
\]

It follows from \eqref{eq:estP}, the boundedness of $\xi,\zeta$, and Doob's
martingale inequality (cf. \cite{doob1953stochastic}) that (recall
\eqref{eq:estP})
\[
\left\vert \left\langle \xi,\left[  P(\tau+\varepsilon)-P(\tau)\right]
\zeta\right\rangle _{H}\right\vert ^{2}\leq 4 \big(\max\nolimits_{t\in
[0,1]}\Lambda_{t}\big) \left\Vert \xi\right\Vert _{H} ^{2}\left\Vert
\zeta\right\Vert _{H}^{2}\in L^{1}(\Omega),
\]
Note that
\begin{align*}
&  \left\langle \xi^{i}(\tau+\varepsilon),[P(\tau+\varepsilon)-P(\tau
)]\zeta^{j}(\tau+\varepsilon)\right\rangle _{H}\\
&  =\varepsilon^{-1}(W_{\tau+\varepsilon}^{i}-W_{\tau}^{i})(W_{\tau
+\varepsilon}^{j}-W_{\tau}^{j})\left\langle \xi,\left[  P(\tau+\varepsilon
)-P(\tau)\right]  \zeta\right\rangle _{H}%
\end{align*}
Then from Theorem \ref{thm:obsde}(iv) and the Lebesgue dominated convergence
theorem, we have
\begin{align*}
&  \left\vert \mathbb{E}\left\langle \xi^{i}(\tau+\varepsilon),[P(\tau
+\varepsilon)-P(\tau)]\zeta^{j}(\tau+\varepsilon)\right\rangle _{H}\right\vert
^{2}\\
&  \leq\varepsilon^{-2}\cdot\mathbb{E}\big(|W_{\tau+\varepsilon}^{i}-W_{\tau
}^{i}|^{2}|W_{\tau+\varepsilon}^{j}-W_{\tau}^{j}|^{2}\big)\cdot\mathbb{E}%
\left\vert \left\langle \xi,\left[  P(\tau+\varepsilon)-P(\tau)\right]
\zeta\right\rangle _{H}\right\vert ^{2}\\
&  \leq3\mathbb{E}\left\vert \left\langle \xi,\left[  P(\tau+\varepsilon
)-P(\tau)\right]  \zeta\right\rangle _{H}\right\vert ^{2}\rightarrow
0,\ \ \ \ \text{as}\ \ \varepsilon\downarrow0.
\end{align*}
The lemma is proved.
\end{proof}

Now we are in a position to complete the proof of Proposition \ref{prop:P}.

\begin{proof}
[Proof of Proposition \ref{prop:P}]Fix any $\vartheta,\theta\in L_{\mathcal{F}%
_{\tau}}^{4}(\Omega;l^{2}(H))$. For arbitrary $\delta>0$, we can find (from
the density) a large $m_{\delta}$ and $\{\xi_{i},\zeta_{i}\,;\,i=1,\dots
,m_{\delta}\}$ $\subset B_{A}(\tau)$ such that%
\begin{align*}
&  \vartheta_{m_{\delta}}:=(\xi_{1},\dots,\xi_{m_{\delta}}),\ \ \theta
_{m_{\delta}}:=(\zeta_{1},\dots,\zeta_{m_{\delta}}),\\
&  \mathbb{E}\left\Vert \vartheta-\vartheta_{m_{\delta}}\right\Vert
_{l^{2}(H)}^{4}+\mathbb{E}\left\Vert \theta-\vartheta_{m_{\delta}}\right\Vert
_{l^{2}(H)}^{4}<\delta^{4}.
\end{align*}
Then we have%
\begin{align*}
\big\vert\mathbb{E}\left\langle \vartheta,P(\tau)\theta\right\rangle
_{l^{2}(H)}-\mathbb{E}\left\langle \vartheta_{m_{\delta}},P(\tau
)\theta_{m_{\delta}}\right\rangle _{l^{2}(H)}\big\vert  &  <K(\vartheta
,\theta,P)\,\delta,\\
\left\vert T_{\tau}^{\varepsilon}(\vartheta,\theta)-T_{\tau}^{\varepsilon
}(\vartheta_{m_{\delta}},\theta_{m_{\delta}})\right\vert  &  <K(\vartheta
,\theta,P)\,\delta.
\end{align*}
On the other hand, from Lemmas \ref{lem:PandT} and \ref{lem:conP}, one can
easily check that
\[
\mathbb{E}\left\langle \vartheta_{m_{\delta}},P(\tau)\theta_{m_{\delta}%
}\right\rangle _{l^{2}(H)}=\lim_{\varepsilon\downarrow0}T_{\tau}^{\varepsilon
}(\vartheta_{m_{\delta}},\theta_{m_{\delta}}).
\]
Thus we have%
\[
\limsup_{\varepsilon\downarrow0}\big\vert\mathbb{E}\left\langle \vartheta
,P(\tau)\theta\right\rangle _{l^{2}(H)}-T_{\tau}^{\varepsilon}(\vartheta
,\theta)\big\vert < K(\vartheta,\theta,P)\,\delta.
\]
From the arbitrariness of $\delta$, we conclude the proposition.
\end{proof}

\section{The stochastic maximum principle and its proof}

\subsection{The statement of the main theorem}

Now we are in a position to formulate the stochastic maximum principle for
optimal controls. Define the \emph{Hamiltonian}
\[
\mathcal{H}:\,\Omega\times[0,1]\times H\times U\times H\times l^{2}(H)
\to\mathbb{R},
\]
as the form
\begin{equation}
\mathcal{H}(t,x,u,p,q):=l(t,x,u)+\left\langle p,f(t,x,u)\right\rangle
_{H}+\left\langle q,g(t,x,u)\right\rangle _{l^{2}(H)}, \label{eq:hamiltonian}%
\end{equation}
then our main result can be stated as follows.

\begin{theorem}
[stochastic maximum principle]\label{thm:smp} Let Assumptions \ref{ass:A} and
\ref{ass:fglh} be satisfied, $\bar{x}(\cdot)$ be the state process with
respect to an optimal control $\bar{u}(\cdot)$. Then for each $u\in U$, the
\emph{variational inequality}
\begin{align*}
0\,\leq\,  &  \mathcal{H}(\tau,\bar{x}(\tau),u,p(\tau),q(\tau))-\mathcal{H}%
(\tau,\bar{x}(\tau),\bar{u}(\tau),p(\tau),q(\tau))\\
&  +\frac{1}{2}\left\langle g(\tau,\bar{x}(\tau),u)-g(\tau,\bar{x}(\tau
),\bar{u}(\tau)),P(\tau)[g(\tau,\bar{x}(t),u)-g(\tau,\bar{x}(\tau),\bar
{u}(\tau))]\right\rangle _{l^{2}(H)}%
\end{align*}
holds for a.e. $(\tau,\omega)\in\lbrack0,1)\times\Omega$, where $(p(\cdot
),q(\cdot))$ is the solution to BSEE \eqref{eq:bsee} with
\[
F(t,p,q)=\mathcal{H}_{x}(t,\bar{x}(t),\bar{u}(t),p,q)\,,\qquad\xi=h_{x}%
(\bar{x}(1)),
\]
and $P(\cdot)$ is the generalized solution to OBSDE \eqref{eq:obsde} with%
\begin{align*}
&  A_{\sharp}(t)=f_{x}(t,\bar{x}(t),\bar{u}(t)),\qquad C(t)=g_{x}(t,\bar
{x}(t),\bar{u}(t)),\\
&  G(t)=\mathcal{H}_{xx}(t,\bar{x}(t),\bar{u}(t),p(t),q(t)),\qquad
P_{1}=h_{xx}(\bar{x}(1)).
\end{align*}

\end{theorem}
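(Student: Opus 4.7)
The plan is to follow Peng's classical spike-variation strategy, with the infinite-dimensional technicalities handled by Lemma~2.2 (duality), Theorem~3.4 (existence of $P$) and, crucially, Proposition~3.5 in place of Itô's formula on $P$.

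First I would introduce a spike variation: fix $\tau\in[0,1)$, $u\in U$ and $\varepsilon\in(0,1-\tau)$, and define $u^{\varepsilon}(t)=u$ on $[\tau,\tau+\varepsilon)$ and $u^{\varepsilon}(t)=\bar u(t)$ elsewhere. Let $x^{\varepsilon}(\cdot)$ be the corresponding mild solution. I would then introduce the first and second order variations $y^{\varepsilon}$ and $z^{\varepsilon}$: $y^{\varepsilon}$ is the mild solution of the linear SEE driven by $A_{\sharp}(t)=f_x(t,\bar x,\bar u)$ and $C(t)=g_x(t,\bar x,\bar u)$, with inhomogeneous term $\delta g(t):=g(t,\bar x(t),u)-g(t,\bar x(t),\bar u(t))$ on $[\tau,\tau+\varepsilon)$ sitting inside the diffusion, and $z^{\varepsilon}$ satisfies a similar linear SEE whose forcing incorporates $\delta f$ in the drift plus the quadratic remainder. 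Using Lemma~2.1 and the boundedness of $f_x,g_x,f_{xx},g_{xx}$ from Assumption~\ref{ass:fglh}, standard estimates yield
\[
\mathbb{E}\sup_{t}\|y^{\varepsilon}(t)\|_H^{2}=O(\varepsilon),\qquad \mathbb{E}\sup_{t}\|z^{\varepsilon}(t)\|_H^{2}=O(\varepsilon^{2}),
\]
and $\mathbb{E}\sup_t\|x^{\varepsilon}-\bar x-y^{\varepsilon}-z^{\varepsilon}\|_H^{2}=o(\varepsilon^{2})$.

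Next I would Taylor-expand the cost increment $J(u^{\varepsilon})-J(\bar u)\ge0$ around $\bar x$ to second order in $(y^{\varepsilon}+z^{\varepsilon})$, keeping spike terms in $l(\cdot,\cdot,u)-l(\cdot,\cdot,\bar u)$. Applying the duality formula of Lemma~2.2 to the pair $(y^{\varepsilon}+z^{\varepsilon},\,p)$ converts the linear terms involving $l_x$ and $h_x$ into integrals of $\langle p,\delta f\rangle_H+\langle q,\delta g\rangle_{l^2(H)}$ over $[\tau,\tau+\varepsilon)$, producing the Hamiltonian difference $\mathcal{H}(\tau,\bar x,u,p,q)-\mathcal{H}(\tau,\bar x,\bar u,p,q)$. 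The remaining second-order pieces combine to
\[
\tfrac12\,\mathbb{E}\langle y^{\varepsilon}(1),P_{1}y^{\varepsilon}(1)\rangle_H+\tfrac12\,\mathbb{E}\int_{0}^{1}\langle y^{\varepsilon}(t),G(t)y^{\varepsilon}(t)\rangle_H\,\mathrm{d}t,
\]
where $G$ and $P_{1}$ are precisely the input of OBSDE~\eqref{eq:obsde} appearing in the theorem.

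The main obstacle is the treatment of this second-order quadratic form. In finite dimensions one would apply Itô's formula to $\langle y^{\varepsilon},Py^{\varepsilon}\rangle$, use the $P$-equation to cancel all terms except the contribution from the spike, and obtain $\tfrac12\mathbb{E}\int_{\tau}^{\tau+\varepsilon}\langle\delta g,P\delta g\rangle_{l^2(H)}\,\mathrm{d}t$. This route is blocked because the generalized solution furnishes no $Q$-component. Here Proposition~\ref{prop:P} is tailor-made: up to rescaling, $y^{\varepsilon}$ on $[\tau,\tau+\varepsilon)$ acts as $y^{\tau,\vartheta}_{\varepsilon}$ with $\vartheta=\varepsilon^{1/2}\delta g(\tau)$ (after a smooth-density approximation of $\delta g(\tau)\in L^{4}_{\mathcal{F}_\tau}(\Omega;l^2(H))$), so after re-scaling by $\varepsilon$ the quadratic form converges to $\mathbb{E}\langle\delta g(\tau),P(\tau)\delta g(\tau)\rangle_{l^2(H)}$. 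Combined with the first-order piece and the right-continuity/integrability afforded by Assumption~\ref{ass:fglh} and estimate~\eqref{eq:estP}, dividing by $\varepsilon$ and letting $\varepsilon\downarrow0$ gives, for every fixed $u\in U$ and a.e.\ $\tau$,
\[
0\le \mathbb{E}\Big[\mathcal{H}(\tau,\bar x(\tau),u,p(\tau),q(\tau))-\mathcal{H}(\tau,\bar x(\tau),\bar u(\tau),p(\tau),q(\tau))+\tfrac12\langle\delta g(\tau),P(\tau)\delta g(\tau)\rangle_{l^2(H)}\Big].
\]
Finally I would use separability of $U$ to pass from a fixed $u$ to a countable dense family and invoke the Lebesgue differentiation theorem to localize from an expectation/integral statement to the pointwise $(\tau,\omega)$-inequality asserted in the theorem.
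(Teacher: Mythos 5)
Your proposal reproduces the paper's argument essentially step for step: spike variation with first- and second-order variational processes and the estimates \eqref{eq:err}, the duality Lemma \ref{lem:duality} to turn the first-order terms into the Hamiltonian difference plus the $\mathcal{H}_{xx}$-quadratic form, and Proposition \ref{prop:P} in place of It\^o's formula on $P$ to identify the rescaled quadratic form's limit as $\mathbb{E}\langle g^{\Delta}(\tau),P(\tau)g^{\Delta}(\tau)\rangle_{l^{2}(H)}$, with the Lebesgue differentiation theorem doing the time localization (the paper uses it both to freeze $g^{\Delta}$ at $\tau$, Lemma \ref{lem:leb}, and to localize the first-order integral). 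The only gloss is at the very end: removing the expectation to get the a.e.-$(\tau,\omega)$ inequality requires the standard argument with $\mathcal{F}_{\tau}$-measurable (random) spike values, i.e.\ perturbing on arbitrary $\mathcal{F}_{\tau}$-sets, rather than merely separability of $U$ plus Lebesgue differentiation --- precisely the ``standard argument'' the paper cites.
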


\subsection{Proof of the main theorem}

The proof is divided into the following three steps.

\emph{Step 1. The spike variation and second-order expansion.}

The approach in this step is quite standard (cf. \cite{peng1990general}%
). Recall that $\bar{x}(\cdot)$ is the state process with respect to a
optimal control $\bar{u}(\cdot)$. We construct a perturbed admissible control
in the
following way%
\[
u^{\varepsilon}(t):=\left\{
\begin{array}
[c]{ll}%
u, & \text{if }t\in\lbrack\tau,\tau+\varepsilon],\\
\bar{u}(t), & \text{otherwise,}%
\end{array}
\right.
\]
with fixed $\tau\in\lbrack0,1)$, sufficiently small positive $\varepsilon$,
and an arbitrary $U$-valued $\mathcal{F}_{\tau}$-measurable random variable
$u$ satisfying $\mathbb{E}|u|_{U}^{4}<\infty$.

Let $x^{\varepsilon}(\cdot)$ be the state process with respect to control
$u^{\varepsilon}(\cdot)$. For the sake of convenience, we denote for
$\varphi=f,g,l,f_{x},g_{x},l_{x},f_{xx},g_{xx},l_{xx},$
\begin{align*}
\bar{\varphi}(t) &  :=\varphi(t,\bar{x}(t),\bar{u}(t)),\\
\varphi^{\Delta}(t) &  :=\varphi(t,\bar{x}(t),u^{\varepsilon}(t))-\bar
{\varphi}(t,\bar{x}(t),\bar{u}(t)),
\end{align*}
Let $x_{1}(\cdot)$ and $x_{2}(\cdot)$ be the solutions respectively to%
\begin{align*}
x_{1}(t)= &  \int_{0}^{t}\mathrm{e}^{(t-s)A}\bar{f}_{x}(s)x_{1}(s)\,\mathrm{d}%
s+\int_{0}^{t}\mathrm{e}^{(t-s)A}[\bar{g}_{x}(s)x_{1}(s)+g^{\Delta}%
(s)]\cdot\mathrm{d}W_{s},\\
x_{2}(t)= &  \int_{0}^{t}\mathrm{e}^{(t-s)A}\Big[\bar{f}_{x}(s)x_{2}%
(s)+\frac{1}{2}\bar{f}_{xx}(s)\left(  x_{1}\otimes x_{1}\right)
(s)+f^{\Delta}(s)\Big]\,\mathrm{d}s\\
&  +\int_{0}^{t}\mathrm{e}^{(t-s)A}\Big[\bar{g}_{x}(s)x_{2}(s)+\frac{1}{2}%
\bar{g}_{xx}(s)\left(  x_{1}\otimes x_{1}\right)  (s)+g_{x}^{\Delta}%
(s)x_{1}(s)\Big]\,\cdot\mathrm{d}W_{s}.
\end{align*}
It follows from Lemma \ref{lem:see} that
for all $t\in\lbrack0,1],$%
\begin{eqnarray}
\left\{~\begin{split} &\varepsilon^{-2}\mathbb{E}\left\Vert
x_{1}(t)\right\Vert _{H}^{4} +\varepsilon^{-1}\mathbb{E}\left\Vert
x_{1}(t)\right\Vert _{H}^{2}
+\varepsilon^{-2}\mathbb{E}\left\Vert x_{2}(t)\right\Vert _{H}^{2}\leq K,\\
&\varepsilon^{-2}\mathbb{E}\left\Vert
x^{\varepsilon}(t)-\bar{x}(t)\right\Vert
_{H}^{4}+\varepsilon^{-1}\mathbb{E}\left\Vert x^{\varepsilon}(t)-\bar
{x}(t)\right\Vert _{H}^{2}\leq K,\\
&\varepsilon^{-2}\mathbb{E}\left\Vert x^{\varepsilon}(t)-\bar{x}(t)-x_{1}
(t)\right\Vert _{H}^{2}\leq K,\\
&\varepsilon^{-2}\mathbb{E}\left\Vert
x^{\varepsilon}(t)-\bar{x}(t)-x_{1}(t)-x_{2} (t)\right\Vert _{H}^{2}=o(1).
\end{split}
\right.  \label{eq:err}%
\end{eqnarray}
This along with the fact
\[
J(u^{\varepsilon}(\cdot))-J(\bar{u}(\cdot))\geq0
\]
yields (for details, we refer to \cite{yong1999stochastic} or
\cite{du2012general})
\begin{align*}
o(\varepsilon)~ \leq&  ~\mathbb{E}\int_{0}^{1}\Big[l^{\Delta}(t)+\left\langle
\bar{l}_{x}(t),x_{1}(t)+x_{2}(t)\right\rangle _{H}+\frac{1}{2}\left\langle
x_{1}(t),\bar{l}_{xx}(t)x_{1}(t)\right\rangle _{H}\Big]\mathrm{d}t\\
&  +\mathbb{E}\left\langle h_{x}(\bar{x}(1)),x_{1}(1)+x_{2}(1)\right\rangle
_{H}+\frac{1}{2}\left\langle x_{1}(1),h_{xx}(\bar{x}(1))x_{1}(1)\right\rangle
_{H}.
\end{align*}

\emph{Step 2. First-order duality analysis. }

It follows from Lemma \ref{lem:see}(2) that BSEE \eqref{eq:bsee} has a unique
solution $(p(\cdot),q(\cdot))$ in this situation. Recalling the Hamiltonian
\eqref{eq:hamiltonian}, and from Lemma \ref{lem:duality}, we have
\begin{align*}
&  \mathbb{E}\int_{0}^{1}\left[l^{\Delta}(t)+\left\langle \bar{l}%
_{x}(t),x_{1}(t)+x_{2}(t)\right\rangle_{H}\right]  \mathrm{d}%
t+\mathbb{E}\left\langle h_{x}(\bar{x}(1)),x_{1}(1)+x_{2}(1)\right\rangle
_{H}\\
=~  &  o(\varepsilon)+\mathbb{E}\int_{\tau}^{\tau+\varepsilon}\left[
\mathcal{H}(t,\bar{x}(t),u,p(t),q(t))-\mathcal{H}(t,\bar{x}(t),\bar
{u}(t),p(t),q(t))\right]  \,\mathrm{d}t\\
&  +\frac{1}{2}\mathbb{E}\int_{0}^{1}\left[  \left\langle p(t),\bar{f}%
_{xx}(t)\left(  x_{1}\otimes x_{1}\right)  (t)\right\rangle _{H}+\left\langle
q(t),\bar{g}_{xx}(t)\left(  x_{1}\otimes x_{1}\right)  (t)\right\rangle
_{l^{2}(H)}\right]  \,\mathrm{d}t.
\end{align*}
Hence, we get%
\begin{align}
o(1)~\leq~  &  \varepsilon^{-1}\mathbb{E}\int_{\tau}^{\tau+\varepsilon}\left[
\mathcal{H}(t,\bar{x}(t),u,p(t),q(t))-\mathcal{H}(t,\bar{x}(t),\bar
{u}(t),p(t),q(t))\right]  \,\mathrm{d}t\nonumber\label{eq:varitionineq}\\
&  +\frac{1}{2}\varepsilon^{-1}\mathbb{E}\int_{0}^{1}\left\langle
x_{1}(t),\mathcal{H}_{xx}(t,\bar{x}(t),\bar{u}(t),p(t),q(t))x_{1}%
(t)\right\rangle _{H}\,\mathrm{d}t\nonumber\\
&  +\frac{1}{2}\varepsilon^{-1}\mathbb{E}\left\langle x_{1}(1),h_{xx}(\bar
{x}(1))x_{1}(1)\right\rangle _{H}.
\end{align}

\emph{Step 3. Second-order duality analysis and completion of the proof.}

This is the key step in the proof. From Theorem \ref{thm:obsde}, it is easy
to check that OBSDE \eqref{eq:obsde} has a unique generalized solution
$P(\cdot)$ in this situation. Now we introduce the following equation
\begin{align*}
y^{\varepsilon}(t) =  &  \int_{\tau}^{t}\mathrm{e}^{(t-s)A}A_{\sharp
}(s)y^{\varepsilon}(s)\mathrm{d}s\\
&  +\int_{\tau}^{t}\mathrm{e}^{(t-s)A}[C(s)y^{\varepsilon}(s)+\varepsilon
^{-\frac{1}{2}}1_{[\tau,\tau+\varepsilon
)}g^{\Delta}(\tau)]\cdot\,\mathrm{d}W_{s},\ \ t \in[\tau,1].
\end{align*}
Then we have

\begin{lemma}\label{lem:leb}
For a.e. $\tau\in\lbrack0,1)$, we have%
\[
\lim_{\varepsilon\downarrow0}\sup_{t\in\lbrack\tau,1]}\mathbb{E}\big\Vert
\varepsilon ^{-\frac{1}{2}}x_{1}(t)-y^{\varepsilon}(t)\big\Vert_{H}^{4}=0.\
\]
\end{lemma}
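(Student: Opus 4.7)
The plan is to rewrite $w^\varepsilon:=\varepsilon^{-1/2}x_1-y^\varepsilon$ as the mild solution of a linear SEE whose only inhomogeneity is the ``freezing error'' of $g^\Delta$, to apply the $L^4$-estimate of Lemma~\ref{lem:see}(1) to control it, and then to recognise the remaining bound as a $4$-Lebesgue-point property of a Bochner-measurable curve in $L^4(\Omega;l^2(H))$.

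First I would observe that $x_1\equiv 0$ on $[0,\tau]$: since $u^\varepsilon=\bar u$ on $[0,\tau)$, we have $g^\Delta(s)=0$ there, so the $x_1$-equation restricted to $[0,\tau]$ is the homogeneous linear SEE
\[
x_1(t)=\int_0^t\mathrm{e}^{(t-s)A}\bar f_x(s)x_1(s)\,\mathrm{d}s+\int_0^t\mathrm{e}^{(t-s)A}\bar g_x(s)x_1(s)\cdot\mathrm{d}W_s,
\]
whose unique mild solution with zero initial value is $0$ by Lemma~\ref{lem:see}(1). Consequently, on $[\tau,1]$ the process $\varepsilon^{-1/2}x_1$ satisfies the same linear SEE as $y^\varepsilon$ (starting from $0$ at time $\tau$) except that its diffusion forcing is $\varepsilon^{-1/2}g^\Delta(s)$ instead of $\varepsilon^{-1/2}1_{[\tau,\tau+\varepsilon)}(s)g^\Delta(\tau)$, so subtracting yields
\[
w^\varepsilon(t)=\int_\tau^t\mathrm{e}^{(t-s)A}\bar f_x(s)w^\varepsilon(s)\,\mathrm{d}s+\int_\tau^t\mathrm{e}^{(t-s)A}\big[\bar g_x(s)w^\varepsilon(s)+\varepsilon^{-1/2}1_{[\tau,\tau+\varepsilon)}(s)(g^\Delta(s)-g^\Delta(\tau))\big]\cdot\mathrm{d}W_s.
\]
Since $\bar f_x,\bar g_x$ are uniformly bounded by Assumption~\ref{ass:fglh}, Lemma~\ref{lem:see}(1) with $p=4$ together with the Cauchy--Schwarz inequality applied to the time-integral gives
\[
\sup_{t\in[\tau,1]}\mathbb{E}\|w^\varepsilon(t)\|_H^4\leq\mathbb{E}\sup_{t\in[\tau,1]}\|w^\varepsilon(t)\|_H^4\leq K\,\varepsilon^{-1}\int_\tau^{\tau+\varepsilon}\mathbb{E}\|g^\Delta(s)-g^\Delta(\tau)\|_{l^2(H)}^4\,\mathrm{d}s.
\]

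It remains to show that the last quantity tends to $0$ as $\varepsilon\downarrow 0$ for a.e.\ $\tau\in[0,1)$. On $[\tau,\tau+\varepsilon)$ one has $g^\Delta(s)=G(s):=g(s,\bar x(s),u)-g(s,\bar x(s),\bar u(s))$ and $g^\Delta(\tau)=G(\tau)$, and I view $s\mapsto G(s)$ as a curve in the separable Banach space $X:=L^4(\Omega;l^2(H))$, so that the bound in question equals $K\,\varepsilon^{-1}\int_\tau^{\tau+\varepsilon}\|G(s)-G(\tau)\|_X^4\,\mathrm{d}s$. Assumption~\ref{ass:fglh} combined with the $L^4$-moment bounds on $\bar x,u,\bar u$ gives $\sup_s\|G(s)\|_X<\infty$, while the joint predictable measurability of $(s,\omega)\mapsto G(s,\omega)$ together with separability of $X$ makes $G$ strongly (Bochner) measurable via Pettis' theorem. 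The required limit is then exactly the statement that a.e.\ $\tau\in[0,1)$ is a strong $4$-Lebesgue point of $G$ in $X$, which follows from the vector-valued Lebesgue differentiation theorem: approximate $G(\tau)$ in $X$ by an element $c$ of a countable dense subset, apply scalar Lebesgue differentiation to $s\mapsto\|G(s)-c\|_X^4$, and pass to the limit. I expect this last step, and in particular the verification of strong measurability of $G$ in $X$, to be the main conceptual hurdle of the proof; the SEE estimates above are routine applications of Lemma~\ref{lem:see}.
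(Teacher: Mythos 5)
Your proposal is correct and follows essentially the same route as the paper: reduce to the bound $\sup_{t\in[\tau,1]}\mathbb{E}\Vert \varepsilon^{-1/2}x_1(t)-y^{\varepsilon}(t)\Vert_H^4\leq K\varepsilon^{-1}\int_\tau^{\tau+\varepsilon}\mathbb{E}\Vert g^{\Delta}(s)-g^{\Delta}(\tau)\Vert_{l^2(H)}^4\,\mathrm{d}s$ via Lemma \ref{lem:see}, then conclude by the Lebesgue differentiation theorem applied along a countable dense subset of the separable space $L^4(\Omega;l^2(H))$. You merely spell out some steps the paper leaves implicit (the equation for the difference process, $x_1\equiv 0$ on $[0,\tau]$, and measurability of $s\mapsto g^{\Delta}(s)$ as an $L^4$-valued curve), which is fine.
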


\begin{proof}
By Lemma \ref{lem:see}, we have for each $\tau\in\lbrack0,1]$,%
\[
\sup_{t\in\lbrack\tau,1]}\mathbb{E}\big\Vert \varepsilon
^{-\frac{1}{2}}x_{1}(t)-y^{\varepsilon
}(t)\big\Vert_{H}^{4}\leq K\cdot\frac{1}{\varepsilon}\int_{\tau}%
^{\tau+\varepsilon}\mathbb{E}\left\Vert g^{\Delta}(t)-g^{\Delta}%
(\tau)\right\Vert _{l^{2}(H)}^{4}\mathrm{d}t.
\]
From the Lebesgue differentiation theorem, we have for each $X\in
L_{\mathcal{F}_{1}}^{4}(\Omega;l^{2}(H))$,
\[
\lim_{\varepsilon\downarrow0}\frac{1}{\varepsilon}\int_{\tau}^{\tau
+\varepsilon}\mathbb{E}\left\Vert g^{\Delta}(t)-X\right\Vert _{l^{2}(H)}%
^{4}\mathrm{d}t=\mathbb{E}\left\Vert g^{\Delta}(\tau)-X\right\Vert _{l^{2}%
(H)}^{4},\newline\ \ \text{for a.e. }\tau\in\lbrack0,1).
\]
Since $L_{\mathcal{F}_{1}}^{4}(\Omega;l^{2}(H))$ is separable, let $X$ run
through a countable density subset $Q$ in $L_{\mathcal{F}_{1}}^{4}%
(\Omega;l^{2}(H))$, and denote
\[
E:=\bigcup E_{X}:=\bigcup\big\{\tau:\text{the above relation does not hold for
}X\big\}.
\]
Then we have $\mathrm{meas}(E)=0$. For arbitrary positive $\eta$, take an
$X\in Q$ such that
\[
\mathbb{E}\left\Vert g^{\Delta}(\tau)-X\right\Vert _{l^{2}(H)}^{4}<\eta,
\]
then for each $\tau\in\lbrack0,1)\backslash E$,
\begin{align*}
&  \lim_{\varepsilon\downarrow0}\frac{1}{\varepsilon}\int_{\tau}%
^{\tau+\varepsilon}\mathbb{E}\left\Vert g^{\Delta}(t)-g^{\Delta}%
(\tau)\right\Vert _{l^{2}(H)}^{4}\,\mathrm{d}t\\
&  \leq\lim_{\varepsilon\downarrow0}\frac{8}{\varepsilon}\int_{\tau}%
^{\tau+\varepsilon}\mathbb{E}\left\Vert g^{\Delta}(t)-X\right\Vert _{l^{2}%
(H)}^{4}\mathrm{d}t+8\mathbb{E}\left\Vert g^{\Delta}(\tau)-X\right\Vert _{l^{2}(H)}%
^{4}\\
&  \leq16\mathbb{E}\left\Vert g^{\Delta}(\tau)-X\right\Vert _{l^{2}(H)}%
^{4}<16\eta.
\end{align*}
From the arbitrariness of $\eta$, we conclude this lemma.
\end{proof}

From the the above lemma, we have%
\begin{align*}
&  \varepsilon^{-1}\mathbb{E}\int_{0}^{1}\left\langle x_{1}(t),G(t)x_{1}%
(t)\right\rangle \,_{H}\mathrm{d}t+\varepsilon^{-1}\mathbb{E}\left\langle
x_{1}(1),P_{1}x_{1}(1)\right\rangle _{H}\\
&  =o(1)+\mathbb{E}\int_{\tau}^{1}\left\langle y^{\varepsilon}%
(t),G(t)y^{\varepsilon}(t)\right\rangle _{H}\,\mathrm{d}t+\mathbb{E}%
\left\langle y^{\varepsilon}(1),P_{1}y^{\varepsilon}(1)\right\rangle
_{H},\ \ \forall\tau\in\lbrack0,1)\backslash E.
\end{align*}
Keeping in mind the above relation, and applying Proposition \ref{prop:P}, we
conclude for each $\tau\in\lbrack0,1)\backslash E$,
\begin{align*}
&  \mathbb{E}\left\langle g^{\Delta}(\tau),P(\tau)g^{\Delta}(\tau
)\right\rangle _{l^{2}(H)}\\
&  =~\lim_{\varepsilon\downarrow0}\varepsilon^{-1}\left\{  \mathbb{E}\int
_{0}^{1}\left\langle x_{1}(t),G(t)x_{1}(t)\right\rangle _{H}\,\mathrm{d}%
t+\mathbb{E}\left\langle x_{1}(1),P_{1}x_{1}(1)\right\rangle _{H}\right\}  .
\end{align*}
This along with \eqref{eq:varitionineq} and the Lebesgue differentiation
theorem yields for each $u\in U$,
\begin{align*}
0~\leq~  &  \mathbb{E}\left[  \mathcal{H}(\tau,\bar{x}(\tau),u,p(\tau
),q(\tau))-\mathcal{H}(\tau,\bar{x}(\tau),\bar{u}(\tau),p(\tau),q(\tau
))\right] \\
&  +\frac{1}{2}\mathbb{E}\left\langle g^{\Delta}(\tau),P(\tau)g^{\Delta}%
(\tau)\right\rangle _{l^{2}(H)},\ \ \ \ \ \ \text{a.e. }\tau\in\lbrack0,1).
\end{align*}
Therefore, the desired variational inequality follows from a standard
argument (cf. \cite{kushner1972necessary}). This completes the proof of the
stochastic maximum principle.

\end{document}